\documentclass{trbunofficial}
\usepackage{graphicx}
\usepackage{enumitem}    
\usepackage{amsfonts}    
\usepackage[hidelinks]{hyperref}

\usepackage{graphicx}
\usepackage{subcaption}
\usepackage{float}
\usepackage{booktabs}

\usepackage{xcolor} 
\definecolor{steelblue}{RGB}{70,130,180} 

\usepackage{amsthm}
\newtheorem{proposition}{Proposition}
\newtheorem{lemma}{Lemma}

\AuthorHeaders{Hammerl, Jin, Seshadri, Rasmussen and Nielsen}
\title{Optimal Evacuation Control in Large Urban Networks with Stochastic Demand}

\author{%
  \textbf{Alexander Hammerl, Corresponding Author}\\
  asiha@dtu.dk\\
  \hfill\break%
  \textbf{Wen-long Jin}\\
  wjin@uci.edu
  \hfill\break%
  \textbf{Ravi Seshadri}\\
  ravse@dtu.dk
  \hfill\break%
  \textbf{Thomas Kjær Rasmussen}\\
  tkra@dtu.dk
  \hfill\break%
  \textbf{Otto Anker Nielsen}\\
  oani@dtu.dk
}

\newwrite\trbwc
\immediate\openout\trbwc=\jobname-words 
\immediate\write\trbwc{8291}            
\immediate\closeout\trbwc

\begin{document}
\maketitle

\section{Abstract}

We develop a risk-aware Model Predictive Control (MPC) framework for large-scale vehicular evacuations. Traffic dynamics are captured by the Generalized Bathtub Model, which describes the network-wide trip completion rate by tracking the time evolution of the distribution of remaining trip distances. We model evacuation inflow as a stochastic inflow process, and employ origin gating as the control policy, implemented through staged departure orders or adaptive ramp metering. A convex objective integrates total evacuation delay with a generic hazard-exposure term which can embed any spatial risk field (e.g., flood depth, fire intensity). We prove that if the residual-distance distribution exhibits non-decreasing hazard rate, then the optimal origin-gating profile is necessarily monotone decreasing and, under an inflow cap, bang–bang (single switch). This result supplies a closed-form seed for numerical optimizations and clarifies why early heavy release followed by throttling is optimal. Furthermore, we demonstrate that the assumption of a non-decreasing hazard rate is always satisfied when the origins of evacuation movements are uniformly distributed over a convexly bounded evacuation zone—a property that is fulfilled in the vast majority of real evacuation scenarios, at least approximately. The framework is demonstrated through a flood evacuation scenario on Amager Island, a densely populated area of Copenhagen that faces significant flood risk due to its low elevation and coastal exposure. The Generalized Bathtub evacuation model is coupled with a lightweight shallow-water model parameterized using real bathymetric and topographic data from Amager Island. Across 10,000 stochastic demand scenarios, the MPC policy reduces the expected area-under-queue by an average of 27\% compared to a no-control scenario.

\hfill\break%
\noindent\textit{Keywords}: Evacuation, Disaster Response,  Model Predictive Control, Generalized Bathtub Model, Risk-Averse Optimization
\newpage

\section{Introduction}

Evacuations involving 1000 or more people occur approximately every two to three weeks in the United States (\cite{Sandia2004}), far more frequently than commonly perceived. Given the diverse range of hazards that trigger evacuations and their frequent occurrence, communities must be prepared to implement traffic control and management measures with minimal advance warning across virtually any location. 

Effective management of large-scale evacuations is critical in mitigating the risks posed by natural disasters such as hurricanes and flash floods. A sudden mass departure of vehicles can overwhelm transportation networks, leading to excessive congestion and dangerous delays. Trying to evacuate everyone at once can generate harmful gridlock that impedes evacuation progress, whereas carefully scheduling departures can dramatically improve clearance times. Our work introduces a risk-aware Model Predictive Control (MPC) framework tailored for city-scale evacuations under various types of disasters. Unlike traditional evacuation strategies that ignore evolving threat dynamics, our approach explicitly accounts for changing risk information and uncertainty in demand in a modular fashion that can accommodate any type of natural or man-made hazard. 

Traffic flow theory forms the analytical foundation for understanding vehicular dynamics and assessing the effectiveness of control measures during evacuations. A commonly used approach in traffic flow theory describes the spatiotemporal dynamics of vehicles on unidirectional road segments. This class includes macroscopic frameworks (\cite{Lighthill1955}, \cite{Richards1956}, \cite{Payne1971}, \cite{Aw2000}, \cite{Zhang2002}), which treat traffic as a continuous medium, as well as car-following approaches (\cite{Bando1995},\cite{Treiber2000}, \cite{Helbing2001} ,\cite{Kesting2010}), which capture the behavior of individual vehicles. An alternative class of traffic flow models consists of reservoir frameworks, which describe the dynamics of spatially aggregated traffic quantities at the network level \cite{Geroliminis2008,Daganzo2008, Helbing2009,Hammerl2024a,Hammerl2025}. This approach typically relies on the assumption of a time-invariant relationship between vehicle accumulation and average flow in a homogeneous network—a relationship commonly referred to as the Network Fundamental Diagram (NFD). In some cases, these frameworks also explicitly describe the relationship between network accumulation and the number of completed trips; see, e.g., \cite{Vickrey1991}, \cite{Daganzo2007}, \cite{Arnott2013}, \cite{Fosgerau2015}. The approach in \cite{Jin2020} (see also \cite{Jin2021}, \cite{Jin2025}) extends this concept by incorporating the distribution of remaining trip lengths of individual vehicles, thereby introducing a spatial structure into the reservoir. This extended framework, known as the Generalized Bathtub Model, enables a more realistic representation of network dynamics—particularly in situations where trip lengths are highly heterogeneous. Such heterogeneity is especially relevant in evacuation scenarios, where all vehicles are directed toward a limited number of shelters or exits, resulting in strongly inhomogeneous trip distance distributions that must be explicitly accounted for. See also \cite{Martinez2021} for a discussion of the challenges associated with measuring such distance distributions. A parametrization of network-wide traffic behavior based on observations under normal traffic conditions—as is common in conventional NFD approaches—is therefore unsuitable for our application. 

\cite{MurrayTuite2013} classify evacuation literature into three categories: demand modeling, traffic assignment and route selection, and evacuation strategies. Demand modeling estimates evacuation behavior by who evacuates, when, where, and by which mode. This encompasses evacuation decision-making, departure timing (\cite{Fu2007},\cite{Pel2012}), destination choice (\cite{Cheng2008},\cite{Wilmot2006}), and mode selection(\cite{Wu2012}; \cite{Deka2010}). Beyond primary evacuation trips, demand models incorporate background traffic, intermediate trips such as family pickups (\cite{MurrayTuite2004}), and shadow evacuations—unplanned evacuations by populations outside designated risk zones (\cite{Zeigler1981}; \cite{Sorensen2006}). Traffic assignment and route selection address evacuee distribution across transportation networks and path selection behavior. Individual route choice typically reflects perceived safety and availability of real-time information, with evacuees preferring major highways and known routes (\cite{MurrayTuite2012}; \cite{Dow2002}). Simulation-based assignment tools such as DYNASMART, DynusT and other evaluate evacuation performance under various scenarios (\cite{Pel2012}). However, these tools rely on behavioral assumptions difficult to validate in rare evacuation contexts. The authors emphasize hybrid and en-route choice models to better capture dynamic driver responses. Empirical evidence indicates that maximum traffic flows during evacuations may differ substantially from normal conditions, necessitating adjustments such as Maximum Sustainable Evacuation Traffic Flow Rates \cite{Wolshon2009}. Evacuation strategies encompass high-level policies and operational measures designed to optimize clearance times and safety. Common approaches include access-based gating, also known as staged evacuation \cite{Sheffi1981, Chiu2005, So2010}, and contraflow operations that reverse inbound lanes to increase outbound capacity (\cite{Wolshon2001}). Research has examined integrating real-time traffic management tools, such as signal retiming and variable message signs, into evacuation operations \cite{MurrayTuite2012b, Sfeir2018}. 

Furthermore, an extensive body of literature exists on pedestrian flow evacuations. Representative studies on pedestrian evacuation and crowd disasters include \cite{Kirchner2002, Schadschneider2011, Borrmann2012, Helbing2000, Helbing2005, Helbing2002, Helbing2007, Helbing2012, Helbing2012b, Schreckenberg2002, Kuligowski2005, Sbayti2006}.

Our work extends the existing literature reviewed above by the following key contributions and novelties: First, our framework operates in real-time through a Model Predictive Control approach, enabling rapid adaptation to fast-changing evacuation scenarios and evolving threat dynamics. Second, the reservoir-based Generalized Bathtub Model formulation imposes no computational constraints on network size, ensuring scalability to large urban areas without the dimensional limitations of traditional network flow models. Third, our approach makes no specific assumptions about network topology or hazard type, providing a flexible framework that can accommodate any spatial risk field. Finally, we demonstrate that under mild regularity conditions—which are commonly met in evacuation scenarios—the hazard rate of the initial trip length distribution is non-decreasing. This property guarantees that the optimal control solution takes a simple bang-bang form that can be computed efficiently and expressed in closed form.

The remainder of this paper is structured as follows. section \ref{sec:framework} presents the Generalized Bathtub Model for traffic dynamics and formulates our risk-aware stochastic control problem within a Model Predictive Control framework. Section \ref{sec:solution} derives analytical solutions using the stochastic Pontryagin Maximum Principle and proves that under non-decreasing hazard rate distributions, a condition satisfied by many practical evacuation scenarios, the optimal control reduces to a computationally efficient bang-bang solution. Section \ref{sec:appl} demonstrates the framework through a detailed case study of Amager Island, coupling the evacuation model with shallow-water flood simulation and evaluating the model's performance. Section \ref{sec:conclusion} synthesizes our contributions and identifies directions for future research.

\section{Modeling Framework}
\label{sec:framework}

The generalized bathtub model (\cite{Jin2020}, \cite{Jin2021}) describes the temporal evolution of the number of active trips $\delta(t,x)$ with a remaining distance of at least $x$ in a two-dimensional traffic network. The average vehicle speed
\begin{equation}
\label{eq:speed}
v(t) = V(\rho(t))
\end{equation}
is determined exclusively by the vehicle density $\rho(t) = \delta(t)/L$, where $\delta(t) = \delta(t,0)$ denotes the total number of active trips and $L$ represents the total lane length of the reservoir representation of the network. In this section, we assume normalization of $L$ to 1. The macroscopic speed-density relationship $V(\rho)$ of the network is a time-independent function of density $\rho$ that we assume to be non-increasing and non-negative. We define the network fundamental diagram (NFD) through the flow-density relationship
\begin{equation}
\label{eq:nfd}
Q(\rho) = \rho \cdot V(\rho),
\end{equation}
where $Q(\rho)$ represents the network flow as a function of density. This relationship is concave in $\rho$, reaching its maximum value $q_{\max}$ at the critical density $\rho_{\text{crit}}$, such that $Q(\rho_{\text{crit}}) = q_{\max}$. 

Typical formulations for the speed-density relationship are:
\[
\begin{aligned}
V(\rho) &= 
\begin{cases}
v_f, & \text{if } \rho \leq \rho_c, \\
\displaystyle\frac{w(\rho_j - \rho)}{\rho}, & \text{if } \rho > \rho_c,
\end{cases} 
&& \text{(triangular MFD, highways)} \\[1.5ex]
V(\rho) &= 
\begin{cases}
v_f, & \text{if } \rho \leq \rho_1, \\
\displaystyle\frac{q_{\text{max}}}{\rho}, & \text{if } \rho_1 < \rho < \rho_2, \\
\displaystyle\frac{w(\rho_j - \rho)}{\rho}, & \text{if } \rho \geq \rho_2,
\end{cases} 
&& \text{(trapezoidal MFD, urban traffic)}
\end{aligned}
\]
Here, $v_f$ denotes the free-flow speed, $w$ is the congestion wave speed, $\rho_c$ (or $\rho_1$) is the critical density at which flow is maximized, $\rho_j$ is the jam density at which speed drops to zero, and $q_{\text{max}}$ denotes the maximum flow achievable.

Further, we denote by $f(t,x)$ the rate of trips that start at time $t$ and have a length of at least $x$. From the definition of these variables, the discrete-time conservation law
\begin{equation}
\label{eq:cl_discrete}
\delta(t + \Delta t, x) = \delta(t, x + v(t)\Delta t) + f(t,x)\Delta t
\end{equation}
follows immediately for an infinitesimal time interval $\Delta t$. Taking the limit $\Delta t \to 0$ and substituting \eqref{eq:speed} yields the partial differential equation
\begin{equation}
\label{eq:cl_cont}
\frac{\partial}{\partial t} \delta(t,x) - V(\delta(t)) \frac{\partial}{\partial x} \delta(t,x) = f(t,x).
\end{equation}

Network inflow is regulated through origin-zone gating, an access control policy that regulates the release rate of vehicles entering the evacuation network. This control mechanism offers the advantage of rapid deployment and implementation without requiring roadside infrastructure modifications or continuous driver compliance. Such gating can be enacted through digital instructions (e.g., mobile apps or variable message signs), physical enforcement (e.g., police checkpoints or pop-up ramp meters), or loudspeaker broadcasts informing residents when to evacuate. Alternatives like dynamic speed limits or signal retiming require complex infrastructure coordination or high levels of compliance, while origin–zone gating offers a practical and low-dimensional lever for model–predictive evacuation control. It can be centrally coordinated and enforced using existing communication channels, making it a particularly attractive option for time-critical and large-scale emergencies.

For this purpose, let $\bar f(t,x)$ be the uncontrolled spatio-temporal departure rate (veh~s$^{-1}$\,m$^{-1}$) of trips whose initial remaining distance is at least $x$. We assume that all vehicles in the evacuation zone prefer to depart at \( t = 0 \). The gating function \( u(t,x) \) denotes the number of vehicles with initial travel distance \( x \) that are permitted to begin their journey at time \( t \), which yields the consistency condition $
\bar{f}(t,x) \geq u(t,x).$ 
The function \( \bar{f}(t,\cdot) \) is modeled as a geometric Brownian motion in the space \( L^2([a,b]) \) of square-integrable functions. This results in the stochastic partial differential equation
\[
\frac{\partial \bar{f}(t,x)}{\partial t} = -u(t,x) \, \bar{f}(t,x) + \sigma \, \bar{f}(t,x) \, \eta(t,x),
\]
where \( \sigma > 0 \) represents the process volatility, and \( \eta(t,x) \) denotes the space-time white noise with
\[
\mathbb{E}\left[ \eta(t,x) \, \eta(t',x') \right] = \delta_D(t - t') \, \delta_D(x - x'),
\]
where $\delta_D(\cdot)$ represents the Dirac delta distribution. In geometric Brownian motion, the current value of the stochastic process changes in each infinitesimal time step by a normally distributed relative increment. This multiplicative structure of the noise ensures, in contrast to additive random processes, that $\bar{f}$ cannot become negative or increase again after reaching zero.

The cumulative delay experienced by the system is given by
\begin{equation}
D(U) = \int_0^T \bigl( \delta(t,0) + \bar{f}(t,0) \bigr)\, dt.
\end{equation}

We pose a dynamic optimization problem over the space of admissible control functions \( U = \{u(t,x)\}_{t\in[0,T],x\in[0,X]} \), where each \( u \) is drawn from a Banach space \( \mathcal{U} \subset L^\infty([0,T]\times[0,X]) \) of bounded measurable functions. The optimization problem reads:
\begin{equation}\label{eq:opt_continuous}
\min_{u \in \mathcal{U}_\mu} \; J(U)
\;:=\;
\mathbb{E}[D(U)] \;+\; \beta \cdot \text{AVaR}_\alpha\bigl[D(U)\bigr],
\end{equation}
subject to the dynamics governing \(\delta(t,x)\) and \(\bar{f}(t,x)\), and physical constraints on the admissible control space:
\[
\mathcal{U}_\mu := \left\{ u \in \mathcal{U} \;\middle|\; \|u\|_\infty \leq \mu, \; u(t,x) \geq 0 \text{ for all } t,x \right\}.
\]
The parameter \( \beta \geq 0 \) controls the trade-off between average performance and tail events.
The risk measure \(\text{AVaR}_\alpha[D(U)]\) is the \emph{Average Value at Risk} at confidence level \( \alpha \in (0,1) \), defined as
\[
\text{AVaR}_\alpha[D] := \inf_{\eta \in \mathbb{R}} \left\{ \eta + \frac{1}{1-\alpha} \mathbb{E}[(D - \eta)^+] \right\},
\]
where \( (x)^+ := \max\{x,0\} \). It captures the expected delay in the worst \( 100(1-\alpha)\% \) of realizations.

The objective function \eqref{eq:opt_continuous} offers several critical advantages. It offers a continuous and convex formulation in the control variable, while responding sensitively to variations in queue dynamics. In contrast, objectives like total clearance time yield identical objective function values for many distinct realizations. Additionally, risk-weighted delay directly incorporates risk preferences through parameters $\lambda$ and $\alpha$, allowing explicit control over the trade-off between efficiency and robustness.

We implement the dynamic optimization problem~(5) using a receding horizon Model Predictive Control (MPC) framework. In MPC, an optimization problem is solved over a finite prediction horizon at each time step, but only the first control action is implemented before the entire optimization is repeated with updated state information---this receding horizon approach balances computational feasibility of finite-horizon problems with the performance of infinite-horizon control by continuously incorporating new information and implicitly extending the effective control horizon.

The MPC framework provides robust and adaptive evacuation management through real-time responsiveness to evolving conditions and stochastic demand. At each discrete time step~$k$, the controller solves:

\vspace{1em}
\noindent\textbf{Algorithm 1: MPC for Evacuation Control}
\label{alg:mpc}
\begin{enumerate}[label=\arabic*.]
    \item Measure current state: $\delta(t_k, x)$, $\bar{f}(t_k, x)$
    \item Predict stochastic demand trajectories over horizon $[t_k, t_k + H_p]$
    \item Solve finite-horizon optimization:
    \[
    \min_{u \in \mathcal{U}_\mu} \; \mathbb{E}[D(U)] + \beta \cdot \text{AVaR}_\alpha[D(U)]
    \]
    \[
    \text{s.t.} \quad \text{dynamics (2)--(3) over } [t_k, t_k + H_p], \quad u \in \mathcal{U}_\mu \text{ (control constraints)}
    \]
    \item Apply first control action $u^*(t_k, x)$ over interval $[t_k, t_{k+1}]$
    \item Advance time: $k \leftarrow k + 1$, repeat from Step 1
\end{enumerate}

The prediction horizon $H_p$ is chosen to balance computational tractability with sufficient look-ahead to capture evacuation dynamics. The controller re-optimization at each step naturally accommodates forecast updates and non-compliance.

\section{Solution Strategy}
\label{sec:solution}
\subsection{Analytical Solution via Stochastic Pontryagin Maximum Principle}
In the following, we derive analytical solution approaches for the control problem defined in the preceding section. For the general case without additional assumptions, a solution to \eqref{eq:opt_continuous} is first presented in the form of a system of stochastic differential equations. For this purpose we establish the following proposition which helps to reduce the complexity of the problem:

\begin{proposition}[Threshold optimality in $x$]\label{prop:threshold}
  Let $u^\star$ be any optimal control in~$\mathcal U$.
  Then, for almost every $t\in[0,T]$, there exists a threshold
  $h^\star(t)\in[0,\infty]$ such that
  \[
        u^\star(t,x)=\mathbf{1}_{\{x\le h^\star(t)\}}
        \quad\text{for almost every\ }x\ge0.
  \]
  Equivalently, partial release
  $0<u^\star(t,x)<1$ on a set of positive measure is never optimal.
  The result holds for \textbf{all} initial trip-length
  distributions~$g$.
\end{proposition}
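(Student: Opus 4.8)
The plan is to analyze the problem through the stochastic Pontryagin Maximum Principle that the paper already invokes, exploiting the fact that the control enters \emph{both} state equations linearly. Introducing adjoint processes $\lambda_\delta(t,x)$ and $\lambda_f(t,x)$ for the fields $\delta$ and $\bar f$, the release term $u(t,x)\bar f(t,x)$ appears as a source in the conservation law \eqref{eq:cl_cont} and as the drift $-u(t,x)\bar f(t,x)$ in the departure dynamics. Consequently the control-dependent part of the scenario-wise Hamiltonian density at a point $(t,x)$ takes the separable form $u(t,x)\,\bar f(t,x)\,S(t,x)$, where the switching function $S(t,x)=\lambda_\delta(t,x)-\lambda_f(t,x)$ collects the adjoint difference. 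The risk term is handled by its dual representation $\text{AVaR}_\alpha[D]=\inf_\eta\{\eta+\frac{1}{1-\alpha}\E[(D-\eta)^+]\}$: for fixed $\eta$ the inner objective is an expectation of a quantity that is still affine in the control through the dynamics, so the subgradient of $(\cdot)^+$ only reweights scenarios inside the adjoint equations and does not destroy the linear-in-$u$ structure. The first task is to make this Hamiltonian reduction precise.

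Given linearity, the pointwise minimization in the maximum principle is immediate: since $\bar f(t,x)\ge0$, the minimizing $u(t,x)$ equals its pointwise upper bound (normalized to $1$) where $S(t,x)<0$ and its lower bound $0$ where $S(t,x)>0$. This already yields the \emph{no partial release} claim, because a value $0<u^\star(t,x)<1$ can occur only on the singular set $\{(t,x):\bar f(t,x)S(t,x)=0\}$; on the part where $\bar f>0$ this forces $S=0$, and I would argue that this set has zero product-measure for a.e.\ $t$ (equivalently, that any value chosen there leaves the cost unchanged, so a bang-bang representative may always be selected). The remaining and decisive step is to show that the release region $\{x:S(t,x)<0\}$ is, for a.e.\ $t$, a down-interval $[0,h^\star(t)]$, i.e.\ that $S(t,\cdot)$ changes sign at most once and from negative to positive as $x$ increases.

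The threshold structure then follows from monotonicity of $S$ in $x$. The guiding mechanism is that a trip held at remaining distance $x$ contributes to the delay functional $\int_0^T(\delta(t,0)+\bar f(t,0))\dd t$ for a duration that is increasing in $x$ — a long trip lingers in the active-trip field $\delta$ and inflates the integral for longer than a short one — so the marginal cost of injection, encoded by $\lambda_\delta(t,x)$, is non-decreasing in $x$, while the marginal queue relief $\lambda_f$ does not reverse this trend, giving $\partial_x S(t,x)\ge0$. I would establish this by integrating the backward adjoint transport equation along the characteristics of \eqref{eq:cl_cont} and reading off that the sensitivity kernel mapping a length-$x$ release to future values of $\delta(\cdot,0)$ grows with $x$. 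Crucially, this uses only the non-negativity and the non-increasing (survival-function) structure of $\bar f(t,\cdot)$ together with the transport geometry, and never the shape of the initial distribution $g$ — which is exactly why the conclusion holds for \emph{all} $g$. The threshold is then $h^\star(t)=\sup\{x\ge0:S(t,x)<0\}$, with the degenerate cases $h^\star(t)=0$ and $h^\star(t)=\infty$ absorbed naturally.

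The main obstacle is the monotonicity $\partial_x S\ge0$, because the speed $V(\delta(t))$ couples every trip length through the aggregate density, so the per-length marginal cost is not a simple additive quantity and the adjoint system is itself state-dependent and stochastic. I expect to need a comparison principle for the backward adjoint dynamics — showing the sensitivity kernel is monotone in $x$ uniformly in the state — and to verify that the AVaR reweighting preserves rather than breaks this monotonicity. As an independent check that sidesteps the adjoint machinery, I would also set up a rearrangement argument: holding the total mass released at time $t$ fixed and transferring it from longer to shorter trips can only lower the in-network person-time without increasing the residual queue, so an optimal profile must load the shortest lengths first; this delivers both the bang-bang and the threshold conclusions and, since it invokes only the nested structure of $\bar f(t,\cdot)$, reconfirms validity for every $g$.
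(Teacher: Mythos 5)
Your primary route --- the stochastic Pontryagin Maximum Principle with a switching function $S(t,x)=\lambda_\delta(t,x)-\lambda_f(t,x)$ --- is genuinely different from the paper's argument, but it has a gap precisely at its decisive step. Linearity of the Hamiltonian in $u$ gives you ``no partial release'' essentially for free, as you note; but the threshold structure requires $\partial_x S(t,x)\ge 0$, and you do not prove this. You correctly identify why it is hard: the speed $V(\delta(t))$ couples all trip lengths through the aggregate state, so the sensitivity of future delay to a length-$x$ injection is not an additive kernel, the backward adjoint transport equation is state-dependent and stochastic, and the AVaR dual reweighting adds a further layer whose sign-preservation must be checked. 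Writing ``I expect to need a comparison principle'' flags the obstacle without clearing it --- as it stands, the entire proposition rests on an unproven monotonicity claim, and the heuristic offered for it (longer trips linger longer in $\delta$) is exactly the physical content that still has to be turned into a statement about the adjoint system. Until that comparison principle is established, the PMP route does not constitute a proof.

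Your closing ``independent check'' is, in fact, the paper's proof: an $\varepsilon$-mass exchange transferring release from longer to shorter distances, the observation that in the bathtub model every vehicle moves at the common speed $v(t)$ so a shorter residual distance always exits no later, hence the queue $Q(s)$ is pointwise no larger after the swap, and finally scenario-wise monotonicity of $\E[D]+\beta\,\mathrm{AVaR}_\alpha[D]$ in $D$. That argument sidesteps the adjoint machinery entirely and handles the $V(\delta(t))$ coupling in the favorable direction (a smaller queue only raises the speed, reinforcing the comparison), which is why the paper uses it as the main proof rather than as a sanity check. I would recommend inverting your structure: promote the rearrangement argument to the proof proper, make the pointwise queue comparison under the state-dependent speed explicit (it needs a short comparison lemma of its own, which the paper also elides), and relegate the PMP/switching-function picture to motivation --- or commit to actually proving the adjoint monotonicity, which is a substantially harder task than the exchange argument.
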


\begin{proof}
Fix a time $t$ and pick any admissible cross-section 
$u(t,\cdot)$ that is not of threshold form.  
Then there are measurable sets $A,B\subset(0,\infty)$,  
with every distance in $A$ smaller than every distance in $B$
($x_A<x_B$), such that $u(t,x)<1$ on $A$ and $u(t,x)>0$ on $B$. Build a perturbed control $\tilde u$ by transferring an $\varepsilon$-mass of release from the longer–distance set~$B$ to the shorter–distance set~$A$, keeping the total release at time~$t$ unchanged. Because vehicles that start closer to the exit always leave the network no later than vehicles that start farther away, the swap cannot increase the queue at any future instant. Formally, let
$
     Q_u(s):=\int_0^\infty \delta_u(s,x)\,dx
$
be the total queue length under $u$ and define $Q_{\tilde u}(s)$
analogously. Then  
\[
     Q_{\tilde u}(s)\;\le\;Q_{u}(s)
     \qquad\forall s\in[t,T],
\]
with strict inequality on a set of positive measure
(the interval during which the exchanged mass is in the system). In the language of Hardy–Littlewood–Pólya
\cite{Hardy1952}, the state profile
$\delta_{\tilde u}(s,\cdot)$ majorizes
$\delta_{u}(s,\cdot)$, i.e. every upper partial sum
$\int_{x}^{\infty}\!\delta(s,y)\,dy$ is smaller. Integrating the queue inequality over time gives, for each demand
scenario~$n$,
\[
     D^{(n)}(\tilde u)
     \;=\;
     \int_{t}^{T}\!Q_{\tilde u}(s)\,ds
     \;<\;
     \int_{t}^{T}\!Q_{u}(s)\,ds
     \;=\;
     D^{(n)}(u).
\]
Because the risk functional
$D\mapsto\mathbb{E}[D]+\lambda\,\mathrm{AVaR}_{\alpha}[D]$ is monotone in $D$, we obtain $J(\tilde u)<J(u)$, contradicting optimality of $u$.
Iterating this $\varepsilon$-swap argument forces the cross-section to collapse to a single distance threshold, completing the proof.
\end{proof}

Proposition~\ref{prop:threshold} tells us that, at any given instant~$t$,
the controller either admits all vehicles whose initial
distance~$x$ is below some cut-off or it admits none of them.
Equivalently, one can describe the same policy from the
distance point of view:

\[
  \tau^\star(x)\;:=\;
  \inf\bigl\{\,t\in[0,T]\;\bigl|\;u^\star(t,x)=1\bigr\},
  \qquad x\ge0,
\]

where $\tau^\star(x)$ is the first time at which vehicles
that start $x$~metres from the exit are allowed to enter the network. After that moment they are never blocked again.   Thus

\begin{equation}\label{eq:indicator}
  u^\star(t,x)\;=\;\mathbf 1_{\,\{\,t\ge \tau^\star(x)\}}.
\end{equation}

This converts the original bivariate control $u^\star(t,x)$ into a single univariate function $\tau^\star(x)$, which will be our
new decision variable. The updated state dynamics read:

\begin{enumerate}[leftmargin=*,labelsep=6mm,itemsep=4pt]

\item[\textbf{(P1)}] Conservation law.
\begin{equation}
    \partial_t\delta(t,x)
    - V\!\left(\tfrac{\delta(t)}{L}\right)
    \partial_x\delta(t,x)
    = f(t,x),
    \qquad (t,x)\in(0,T]\times[0,\infty).
    \tag{P1}\label{eq:P1}
\end{equation}

\item[\textbf{(P2)}] Uncontrolled demand (GBM).
\[
    d\bar f(t,x)
    = \mu(x)\,\bar f(t,x)\,dt
    + \sigma(x)\,\bar f(t,x)\,dW_t,
    \quad \bar f(0,x)\text{ given}.
\]
The minus~\(u\) term of the earlier formulation vanishes
because release is now captured by the indicator~\eqref{eq:indicator}.

\item[\textbf{(P3)}] Controlled inflow.  
Vehicles of initial length~\(x\) enter once, at their
scheduled time~\(u^\star(x)\):
\begin{equation}
    f(t,x) = \bar f\!\bigl(u^\star(x),x\bigr)\,
    \mathbf{1}_{\{\,t = u^\star(x)\}},
    \quad \text{or equivalently} \quad
    f(t,x) = \bar f(t,x)\, \mathbf{1}_{\{t \ge u^\star(x)\}}\, 
    \delta_0\!\bigl(t - u^\star(x)\bigr).
    \tag{P3}\label{eq:P3}
\end{equation}

\end{enumerate}

The risk-averse objective \(J\bigl[u^\star(\cdot)\bigr]\)
remains unchanged; the optimization now
ranges over univariate maps
\(u:[0,\infty)\!\to\![0,T]\).

\begin{proposition}[Isotonicity of \(u^\star\)]\label{prop:isotone}
Let \(u^\star\) be an optimal release–time map.
Then \(u^\star\) is almost surely non-decreasing:
\[
      x_1<x_2
      \;\implies\;
      u^\star(x_1)\le u^\star(x_2).
\]
\end{proposition}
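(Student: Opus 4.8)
The plan is to argue by contradiction with a time-interchange argument, the temporal counterpart of the spatial swap used in Proposition~\ref{prop:threshold}. Suppose the release map were not non-decreasing: then on a set of positive measure there are lengths $x_1<x_2$ with $u^\star(x_1)>u^\star(x_2)$, i.e.\ the longer trips are scheduled \emph{before} the shorter ones. Let $\tilde u$ be the schedule obtained by exchanging the release times of an $\varepsilon$-mass at $x_1$ with one at $x_2$ (shorter-first), every other release left untouched. Because this preserves the mass released at each instant, the total waiting contribution $\int_0^T\bar f(t,0)\,dt$ is unchanged, so with $D=\int_0^T\delta(t,0)\,dt+\int_0^T\bar f(t,0)\,dt$ the entire effect of the swap is carried by the area under the in-network curve $\int_0^T\delta(t,0)\,dt$. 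Since $D\mapsto\mathbb{E}[D]+\lambda\,\mathrm{AVaR}_\alpha[D]$ is monotone, it suffices to show this area strictly decreases in every demand scenario, which then yields $J(\tilde u)<J(u^\star)$ and the desired contradiction.

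The mechanism is the common-clock structure of the bathtub model. All active vehicles share the speed $V(\delta(t,0))$, so the cumulative distance $\Phi(t)=\int_0^t V(\delta(s,0))\,ds$ is a single clock and a vehicle released at $\tau$ with length $x$ leaves exactly when $\Phi$ has advanced by $x$, namely at $\Phi^{-1}(\Phi(\tau)+x)$. For a frozen clock, replacing the long trip by the short one on the interval between the two release times lowers the in-network count there, $\delta_{\tilde u}(s)\le\delta_u(s)$ for $s\in[u^\star(x_2),u^\star(x_1))$; because $V$ is non-increasing this raises the speed, accelerates $\Phi$, and so feeds back into still fewer vehicles remaining. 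This is the precise sense in which releasing short trips first relieves early congestion.

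The genuine difficulty, and the step I expect to be the main obstacle, is that the comparison is \emph{not} pointwise in time: deferring the long trip pushes its own completion later, so at a frozen clock $\delta_{\tilde u}$ can exceed $\delta_u$ at late times. I would therefore establish the comparison at the level of the self-consistent solution rather than pointwise. Writing the density law as a fixed point $\Phi=\mathcal F[\Phi]$ with $\mathcal F[\Phi](t)=\int_0^t V\big(\#\{v:\tau_v\le s,\ \Phi(s)-\Phi(\tau_v)<x_v\}\big)\,ds$, one observes that $\mathcal F$ is monotone increasing in $\Phi$ (a larger clock lowers every residual distance, lowers $\delta$, and, $V$ being non-increasing, raises the integrand). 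The shorter-first data dominate the original data in exactly the order that drives $\mathcal F$ on the early interval, so a monotone-iteration (Tarski) argument gives $\Phi_{\tilde u}\ge\Phi_u$ for the self-consistent clocks; the remaining work is to show the accumulated early lead in $\Phi$ outweighs the deferred long-trip completion, for which I would reach for a Gr\"onwall estimate on the gap $\Phi_{\tilde u}-\Phi_u$, which is positive and non-decreasing while the densities remain ordered. Once the area strictly decreases, iterating the $\varepsilon$-swap across all inversions collapses $u^\star$ to a non-decreasing map, completing the proof.
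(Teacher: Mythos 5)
Your overall strategy --- an $\varepsilon$-interchange of the release times of the two cohorts followed by monotonicity of the risk functional --- is the same route the paper itself takes, and your diagnosis of the central difficulty is in fact sharper than the paper's own treatment: the paper asserts that $\delta_{\tilde u}(t)\le\delta_u(t)$ pointwise with ``all other cohorts' trajectories identical,'' which cannot be right, since under $\tilde u$ the long cohort is deferred and therefore occupies the network at late times at which it would already have exited under $u^\star$. You correctly flag this. The problem is that you then leave precisely that step open (``the remaining work is to show the accumulated early lead in $\Phi$ outweighs the deferred long-trip completion''), and that step is the entire content of the proposition; as written the proposal is a plan with its key lemma unproved.

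Moreover, the step cannot be closed by the qualitative tools you propose. At constant speed $V\equiv v$ the swap is exactly delay-neutral: each vehicle spends time $x/v$ in the network regardless of when it is released, so $\int_0^T\delta_{\tilde u}(t,0)\,dt=\int_0^T\delta_{u}(t,0)\,dt$ (assuming all trips complete before $T$), and even for strictly decreasing $V$ equality persists whenever the two cohorts' occupancy intervals are disjoint from each other and from all other traffic. Hence the strict inequality $D^{(n)}(\tilde u)<D^{(n)}(u^\star)$ that your contradiction requires is simply not true scenario by scenario; the swap is only weakly improving, and any strict gain must be extracted from the density feedback on \emph{overlapping} occupancy. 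A Tarski-type monotone iteration giving $\Phi_{\tilde u}\ge\Phi_u$ and a Gr\"onwall bound on a gap that starts at zero --- and, in the disjoint case, stays at zero --- cannot manufacture strictness. To repair the argument you would need either to weaken the conclusion to ``some non-decreasing map is optimal'' (an exchange argument with $\le$ plus a rearrangement), or to account explicitly for where the occupancy overlap and the strict decrease of $V$ produce a strict drop in $\int\delta$; neither is in the proposal as written.
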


\begin{proof}
Suppose, for contradiction, that
there exist \(x_1<x_2\) with \(u^\star(x_1)>u^\star(x_2)\).
At time \(t=u^\star(x_2)\) the control
releases the distance \(x_2\) cohort,
i.e.\ \(u^\star\bigl(t,x_2\bigr)=1\).
Because \(x_1<x_2\) yet \(u^\star(x_1)>t\),
the shorter-distance cohort \(x_1\) is withheld
while a longer one departs.
Construct a new map \(\tilde u\) by swapping
the two release times:
\(\tilde u(x_1)=u^\star(x_2)\) and
\(\tilde u(x_2)=u^\star(x_1)\); leave all other
values unchanged.
Under the same uncontrolled demand realization,
every vehicle originating at distance \(x_1\) exits
no later (indeed strictly earlier) under \(\tilde u\),
while trajectories of all other cohorts remain identical.
Consequently the queue length \(\delta(t)\) under \(\tilde u\) is point-wise no larger, with strict inequality on a set of positive measure.
Hence \(D^{(n)}(\tilde u)<D^{(n)}(u^\star)\)
for every demand scenario \(n\),
and monotonicity of the risk functional yields
\(J(\tilde u)<J(u^\star)\), contradicting optimality.
\end{proof}

Proposition \ref{prop:isotone} demonstrates that evacuation proceeds in an operationally intuitive "inside-out" pattern, whereby households closer to exit points depart before those at greater distances. This property enhances operational communicability, minimizes counter-flows, and aligns with perceived fairness principles.

After rewriting the control as a release-time map $\tau(x)$, the system dynamics are governed by the coupled stochastic–deterministic state system \eqref{eq:P1} to \eqref{eq:P3} and the scalar cost
$J[\tau]=\mathbb{E}\!\bigl[D(\tau)\bigr]+\lambda\,\mathrm{AVaR}_\alpha[D(\tau)]$. Because both dynamics and cost are linear in the control indicator
$1_{\{t \ge \tau(x)\}}$, the stochastic Pontryagin Maximum Principle
for infinite-dimensional systems (e.g., \cite{Bensoussan2018}) applies.
The infinite-dimensional nature of the problem arises from the fact that the state $\delta(t,x)$ is a function of continuous time and space, i.e., an element of a suitable Banach space of functions, rather than a finite-dimensional vector. This setting is obtained by taking the continuum limit of finite-dimensional control systems governed by ODEs, resulting in PDE-constrained control. The Pontryagin principle yields a necessary condition for optimality in the form of a coupled forward–backward system of PDEs:

\noindent Hamiltonian density:
For each $(t,x)$ define
\[
   \mathcal H(t,x,\delta,p)
   \;=\;
   p(t,x)\,\bar f(t,x)\,
   \mathbf 1_{\{t\ge\tau(x)\}}
   \;+\;
   \ell\bigl(\delta(t,0)\bigr),
\]
where $\ell(z)=z$ is the running–delay cost.
Let $p(t,x)$ be the adjoint process.

\noindent Adjoint (costate) equation:
A necessary condition for optimality is the
backward SPDE\footnote{%
  The term $q\,dW_t$ is the martingale part required by
  the stochastic PMP; its detailed form is standard and
  omitted for space.}
\[
  -dp(t,x)
  =
  \Bigl(
     \partial_x\!\bigl[V(\delta(t))\,p(t,x)\bigr]
     -\ell'\bigl(\delta(t,0)\bigr)\,\delta_{0}(x)
  \Bigr)\,dt
  -q(t,x)\,dW_t,
  \qquad
  p(T,x)=0.
\]

\noindent Pointwise minimisation.
For almost every distance $x$ the map
$t\mapsto\mathcal H\bigl(t,x,\delta,p\bigr)$ is
left-continuous with a right limit and jumps only once,
at $t=\tau(x)$. Optimality therefore reduces to the one–shot condition
\[
   \tau^\star(x)
   \;=\;
   \inf
   \bigl\{\,t\in[0,T]\;\bigl|\;p(t,x)\le 0\bigr\},
   \qquad x\ge0,
\]
i.e.\ release the $x$-cohort the first time its marginal costate becomes non-positive.
Coupled with the forward SPDE for $\delta$, the optimal
state–costate pair satisfies the following closed SDE system:
\[
\boxed{%
\begin{aligned}
 d\bar f(t,x) & = \mu(x)\bar f\,dt+\sigma(x)\bar f\,dW_t, \\
 \partial_t\delta-V(\delta)\partial_x\delta & = \bar f\,\mathbf 1_{\{t\ge\tau^\star(x)\}}, \\
 -dp & = \partial_x\!\bigl[V(\delta)p\bigr]\,dt - q\,dW_t,
      \qquad p(T,x)=0, \\
 \tau^\star(x) & = \inf\{t\!:\,p(t,x)\le0\}.
\end{aligned}}%
\]
Solving these coupled equations yields the optimal release schedule without resorting to nested optimization. While the Pontryagin reformulation collapses the original bivariate control space into a single hitting-time condition per distance~$x$, the resulting forward-backward system of stochastic PDEs still requires substantial computational effort. It requires fine-grained space-time discretization and Monte Carlo sampling of the noise trajectories~$W_t$. This reduction in dimensionality already enables efficient numerical solutions for small to medium-scale scenarios. However, the overall computational cost remains too high for real-time deployment in large-scale emergencies. This motivates the development of additional solution strategies under simplified boundary conditions, as presented in the following subsection.

\subsection{Simplified Bang-Bang Solution}
In many practically relevant evacuation scenarios, the initial trip-length distribution satisfies a structural property known as a non-decreasing hazard rate, also referred to as the Increasing Failure Rate (IFR). This property ensures that the optimal costate yields a bang–bang control profile. Formally, let $\bar{f}(0,x)$ denote the density of initial trip lengths and $\bar{F}(0,x)$ its cumulative distribution. The hazard rate is defined as
\[
h(x) := \frac{\bar{f}(0,x)}{1 - \bar{F}(0,x)},
\]
and describes the conditional likelihood of a trip ending at distance $x$, given that it has reached at least distance $x$. A non-decreasing hazard rate implies that longer trips are increasingly unlikely relative to shorter ones — a property that reflects demand profiles where short-distance trips dominate. This condition is satisfied by many common distributions, including uniform, normal, exponential, and more generally any log-concave density. It also arises naturally when trip origins are uniformly distributed over a convex region and each vehicle evacuates via its nearest exit. Under this condition, the optimal rule reduces to a two-parameter threshold — namely, a cut-off distance and a switching time — which can be computed online in closed form; we derive this lightweight characterisation next:


\begin{proposition}[Single–switch bang–bang release for IFR demand]
\label{prop:PDE_IFR_bangbang}
Consider the stochastic conservation law (P1)--(P3) together with the
risk–neutral cost $J(\tau)=\int_0^T\!\mathbb{E}[\Delta(t)]\,dt$
where $\Delta(t)=\int_0^\infty\!\delta(t,x)\,dx$ is the total queue length.  
Assume that the initial trip–length density $\bar f(0,x)$ possesses a non–decreasing hazard rate $h(x)=\bar f(0,x)/(1-\bar F(0,x))$ (IFR) and that the macroscopic speed-density relationship $V(\rho)$ satisfies $V(\rho_1)\ge V(\rho_2)$ whenever $\rho_1<\rho_2$, and is piecewise $C^1$ with a finite number of breakpoints where left and right derivatives exist. 
Then every optimal release–time map $\tau^\star(x)$ has the form
\[
  \tau^\star(x)=
  \begin{cases}
    0,& x\le x_0,\\[4pt]
    t_b,& x>x_0,
  \end{cases}
  \qquad (x_0,t_b)\in[0,\infty)\times[0,T],
\]
i.e.\ it releases all vehicles up to a critical distance $x_0$
immediately, blocks the remainder, and opens the network fully at a unique switch time $t_b$.  Equivalently, the corresponding
scalar inflow $u^\star(t)$ is bang–bang with a single switch
$u_{\max}\mathbf 1_{[0,t_b)}(t)$.
\end{proposition}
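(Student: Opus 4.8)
My plan is to take the Pontryagin characterization $\tau^\star(x)=\inf\{t:p(t,x)\le0\}$ from the preceding stochastic PMP box as the starting point and to show that, under the IFR hypothesis, the sign pattern of the costate collapses the admissible non-decreasing release maps to the claimed two-value form. First I would use Proposition~\ref{prop:isotone} to restrict to non-decreasing $\tau^\star$, which is equivalent to a non-decreasing release frontier $X(t):=\sup\{x:\tau^\star(x)\le t\}$; the admitted set at each instant is then the interval $[0,X(t)]$ and the only remaining degree of freedom is the scalar release rate $u(t)\in[0,u_{\max}]$. Since the conservation law~\eqref{eq:P1}, the controlled inflow~\eqref{eq:P3} and the running cost $\Delta(t)=\int_0^\infty\delta(t,x)\dd x$ are all affine in the release indicator $\mathbf 1_{\{t\ge\tau(x)\}}$, the associated Hamiltonian is affine in $u$, so the optimal policy is bang--bang with switching governed by the sign of $p$. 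The task then reduces to two claims: (i) for each $x$ the map $t\mapsto p(t,x)$ crosses zero at most once, and (ii) the resulting crossing times $\tau^\star(x)$ take only the two values $0$ and $t_b$.

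For the core step I would solve the backward costate transport equation $-\partial_t p=\partial_x[V(\delta)\,p]+(\text{running-cost source})$ with $p(T,x)=0$ along its characteristics, working in expectation since the hypothesis is risk-neutral (the martingale term $q\,\mathrm dW_t$ drops out of $\mathbb{E}[p]$). The congestion contribution to $p$ is monotone in the accumulation $\delta(t,0)$ because $V'\le0$, while the benefit of releasing the marginal frontier cohort is weighted by the hazard rate $h(x)=\bar f(0,x)/(1-\bar F(0,x))$. The key geometric claim is that the zero set $\{p(t,x)=0\}$ is $L$-shaped: a non-decreasing hazard rate makes the marginal value of the frontier cohort non-increasing in $t$ and the marginal congestion it induces non-decreasing, so their difference is single-crossing in $t$ (giving (i)), and simultaneously the spatial profile of $p$ at fixed $t$ changes sign only once, with the location of that sign change locked to a single distance $x_0$ rather than drifting continuously (giving (ii)). Strict IFR, together with the piecewise-$C^1$ and strictly decreasing behaviour of $V$ on the operating density regime, would be invoked to exclude a singular arc $p\equiv0$ on a set of positive $(t,x)$-measure; on such a set, differentiating the optimality relation and using strict monotonicity of $h$ (or of $V$) yields a contradiction.

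Assembling these, the single temporal switch at $t_b$ together with isotonicity identifies the near interval $[0,x_0]$, with $x_0=X(t_b^-)$, as exactly the cohorts whose costate is already non-positive at $t=0$; these are released immediately, the complement is held, and the network is opened to the far cohort at $t_b$, which is the stated $\tau^\star(x)=0$ for $x\le x_0$ and $\tau^\star(x)=t_b$ for $x>x_0$; in the capped realization this is the single-switch inflow $u_{\max}\mathbf 1_{[0,t_b)}(t)$. I expect the second paragraph to be the real obstacle: the IFR property is an assumption on the \emph{initial} density $\bar f(0,\cdot)$, whereas the sign structure that must be controlled is that of the costate generated by the \emph{nonlinear, density-coupled} transport, so the crux is to prove that monotonicity of the hazard rate is propagated by the flow into single-crossing of $p$ and, in particular, to rule out singular arcs despite the mean-field coupling of the speed through $\delta(t,0)$.
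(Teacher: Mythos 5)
Your proposal takes a genuinely different route from the paper, and that route contains a gap you yourself flag but do not close. The paper never works with the full infinite-dimensional costate $p(t,x)$ for this result. Instead it first collapses the state to the scalar queue $\Delta(t)=\int_0^\infty\delta(t,x)\,dx$, which (Lemma~\ref{lem:ODEcollapse}) satisfies a scalar It\^o SDE $d\Delta=[u(t)-S(\Delta)]\,dt - M(t)\,dW_t$ driven by a \emph{macroscopic service rate} $S(\Delta)$. The IFR hypothesis enters only there: IFR implies the survival function is log-concave with non-increasing derivative (Lemma~\ref{lem:IFR_logconcave}), which makes $S(\Delta)$ concave (Lemma~\ref{lem:S_concave}), so the \emph{scalar} costate obeys $\dot\lambda=-1+\lambda S'(\Delta)$ with $\lambda(T)=0$ and a monotone damping coefficient, forcing at most one zero crossing (Lemma~\ref{lem:single_zero}). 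Bang--bang then follows from linearity in $u$, the switch time from $S'(\Delta(t_b))=1$, and the release-before-hold ordering from concavity of $S$. The two-value structure of $\tau^\star$ is automatic once the control is a scalar $u(t)\in\{0,u_{\max}\}$; no spatial sign-change analysis is needed.

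The gap in your argument is precisely the step you defer to: the claim that the zero set of $p(t,x)$ is ``L-shaped'' --- single-crossing in $t$ for each $x$, with the spatial sign change pinned at one distance $x_0$ rather than drifting in time. You assert that IFR makes the marginal value of the frontier cohort non-increasing in $t$ and the induced congestion non-decreasing, but you give no mechanism by which the hazard-rate monotonicity of the \emph{initial} density is propagated through the nonlinear, density-coupled transport $\partial_x[V(\delta(t))\,p]$ into that sign structure; and claim (ii) is the entire content of the doubly bang--bang result, since a continuously drifting sign-change locus would yield a continuously staged release, not a single switch. Your own closing paragraph concedes this is ``the real obstacle,'' so the proposal as written identifies the right target but does not prove the proposition. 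If you want to salvage your route, the fix is essentially to rediscover the paper's reduction: integrate the costate equation against the admitted-cohort profile so that the only quantity whose sign matters is the scalar $\lambda(t)=\int p(t,x)\,\bar f(t,x)\mathbf 1_{\{x\le X(t)\}}\,dx$, and then show IFR $\Rightarrow$ concavity of the resulting service rate.
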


\bigskip
\noindent The proof relies on four technical steps collected below.

\begin{lemma}[IFR $\Rightarrow$ survival log–concavity]
\label{lem:IFR_logconcave}
If a density $\bar f$ has a non–decreasing hazard rate
$h(x)=\bar f(x)/(1-\bar F(x))$, then the \emph{survival function}
$S(x)=1-\bar F(x)$ is log–concave and its derivative $S'(x)=-\bar f(x)$
is non–increasing.
\end{lemma}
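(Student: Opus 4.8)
The plan is to route both assertions through the single identity that the hazard rate is the negative logarithmic derivative of the survival function. Since $S(x)=1-\bar F(x)$ and $\bar F'=\bar f$, we have $S'(x)=-\bar f(x)$, so
\[
  h(x)=\frac{\bar f(x)}{S(x)}=-\frac{S'(x)}{S(x)}=-\frac{d}{dx}\log S(x).
\]
Integrating from $x=0$, where $S(0)=1$ and hence $\log S(0)=0$, yields the cumulative-hazard representation $\log S(x)=-\int_0^x h(s)\dd s$. This is the linchpin, and the first assertion falls out of it immediately.

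For log-concavity I would write $\Lambda(x):=\int_0^x h(s)\dd s$ for the cumulative hazard, so that $\log S=-\Lambda$. The integral of a non-decreasing function is convex; since the IFR hypothesis says exactly that the integrand $h$ is non-decreasing, $\Lambda$ is convex and $\log S=-\Lambda$ is concave, which is log-concavity of $S$. I would deliberately run the argument through $\Lambda$ rather than through the formal identity $(\log S)''=-h'$, because this needs only monotonicity of $h$ and not its differentiability, and so tolerates the kinked, piecewise-$C^1$ densities that the surrounding setting permits. The sole regularity point I would nail down is absolute continuity of $\bar F$, which is what makes $S'=-\bar f$ hold a.e.\ and justifies recovering $\log S$ from $h$ via the fundamental theorem of calculus.

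The second assertion, that $S'=-\bar f$ is non-increasing, is where I expect the genuine difficulty, and I would flag it as the main obstacle. Monotonicity of $S'$ is equivalent to $\bar f$ being non-decreasing, and this is strictly stronger than IFR: the exponential law---one of the distributions the text itself lists as IFR---has constant hazard rate yet a strictly decreasing density, so there $S'=-\bar f$ is \emph{increasing}, not non-increasing. The clause therefore cannot be deduced from the hazard-rate hypothesis alone. The route I would take is to supply the extra structure that actually holds in the motivating scenario: when origins are uniform over a convex region evacuating toward a central exit, the induced distribution of distances has an increasing density (proportional to $x$ in the planar radially symmetric case), so $\bar f$ is non-decreasing and $S'$ is genuinely non-increasing. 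Accordingly I would either restate this half of the lemma under that stronger hypothesis, or---if Proposition~\ref{prop:PDE_IFR_bangbang} in fact uses only log-concavity of $S$---drop it, but in no event derive it from IFR by itself.
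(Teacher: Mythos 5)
Your treatment of the log-concavity half is correct and essentially equivalent to the paper's: the paper differentiates the identity $h=-S'/S$ once more to get $h'\ge 0\Leftrightarrow S''S-[S']^2\le 0\Leftrightarrow(\log S)''\le 0$, whereas you integrate $h$ to obtain $\log S=-\int_0^x h$, convexity of the cumulative hazard, and hence concavity of $\log S$. Your route buys slightly more: it needs only monotonicity of $h$, not a second derivative of $S$, which matters given the piecewise-$C^1$ densities the surrounding framework allows.

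On the second assertion you have caught a genuine error in the lemma, and your exponential counterexample is decisive: constant hazard rate is non-decreasing, yet $S(x)=e^{-\lambda x}$ has $S'(x)=-\lambda e^{-\lambda x}$ strictly \emph{increasing}. The paper's own proof of this clause is a one-line non-sequitur --- it concludes from concavity of $\log S$ that ``its derivative $S'/S$ is decreasing, and thus $S'$ decreases as well,'' but $S'=(S'/S)\cdot S$ is the product of a decreasing negative factor and a decreasing positive factor, and no monotonicity of the product follows; the exponential case realizes the failure. Your diagnosis of the remedy is also the right one: inspecting the proof of Lemma~\ref{lem:S_concave}, the only property invoked is concavity of $\ell\mapsto\int_0^\ell S(s)\,ds$, which requires merely that $S$ itself be non-increasing (automatic for any survival function), not that $S'$ be non-increasing. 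So the false clause can simply be dropped without damaging the downstream argument, exactly as you propose; restating it under the stronger hypothesis that $\bar f$ is non-decreasing is the alternative if one insists on keeping it.
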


\begin{proof}
Rewrite \( h(x) = -\frac{S'(x)}{S(x)} \). Monotonicity of \( h \) implies
\[
h'(x) \ge 0 \;\Leftrightarrow\; S''(x) S(x) - [S'(x)]^2 \le 0,
\]
which is equivalent to \( \tfrac{d^2}{dx^2} \log S(x) \le 0 \), so \( \log S \) is concave. Hence, its derivative \( S'/S \) is decreasing, and thus \( S' \) decreases as well.
\end{proof}
From \eqref{eq:nfd}, we obtain a
speed threshold $V\bigl(\Delta)$ and hence a
distance threshold
\(
   \ell^\star(\Delta)=T_h\,V\bigl(\Delta\bigr),
\) for any queue length ~$\Delta$, the longest distance that can be covered within the time headway $T_h$ at the prevailing speed.

\begin{lemma}[Concavity of the macroscopic service rate]
\label{lem:S_concave}
Under the IFR assumption on $\bar f(0,x)$ and
the assumptions on the weakly decreasing, piecewise $C^1$
speed–density curve implicit in the NFD $Q(\rho)$,  
the macroscopic service rate
\[
  S(\Delta)
  =
  \int_0^\infty
     \min\!\bigl\{V\bigl(\rho(\Delta)\bigr),\,\ell/T_h\bigr\}\,
     \bar f(0,\ell)\,d\ell
\]
is concave in $\Delta$ and has a non–increasing derivative
$S'(\Delta)$.  It is \emph{strictly} concave except possibly on those
intervals of $\Delta$ that correspond to flat sections of
$V(\rho)$ (e.g.\ the free‐flow branch of a triangular NFD).
\end{lemma}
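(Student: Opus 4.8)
The plan is to collapse the two-dimensional integral into a one-dimensional composition and then read concavity off a single derivative. Writing $v(\Delta):=V(\rho(\Delta))$ and the distance threshold $\ell^\star(\Delta):=T_h\,v(\Delta)$, I would split the integrand at the point where $\ell/T_h=v(\Delta)$, giving
\[
  S(\Delta)=\frac1{T_h}\int_0^{\ell^\star(\Delta)}\!\ell\,\bar f(0,\ell)\dd\ell
           +v(\Delta)\,\bigl(1-\bar F(0,\ell^\star(\Delta))\bigr).
\]
Since $V$ is non-increasing and $\rho(\Delta)$ is increasing, both $v$ and therefore $\ell^\star$ are non-increasing in $\Delta$; this single monotonicity drives every sign argument below.

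The decisive simplification occurs at the first derivative. Differentiating both terms by the Leibniz rule, the two boundary contributions generated by the moving limit $\ell^\star(\Delta)$---equal to $\pm T_h\,v\,\bar f(0,\ell^\star)\,v'$---cancel exactly, leaving the clean expression
\[
  S'(\Delta)=v'(\Delta)\,\bigl(1-\bar F(0,\ell^\star(\Delta))\bigr).
\]
Because $v'\le 0$ and the survival factor is non-negative, this already yields $S'\le 0$ (service degrades with congestion) and isolates the product whose monotonicity I must establish.

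For concavity I would differentiate once more, obtaining
\[
  S''(\Delta)=v''(\Delta)\,\bigl(1-\bar F(0,\ell^\star)\bigr)
             -T_h\,\bigl(v'(\Delta)\bigr)^2\,\bar f(0,\ell^\star).
\]
The second term is automatically $\le 0$, strictly so when $v'\ne 0$ and $\bar f(0,\ell^\star)>0$, so the entire burden rests on the curvature term $v''\,(1-\bar F)$. Here I would invoke the NFD concavity $Q''(\rho)=2V'(\rho)+\rho V''(\rho)\le 0$, which bounds $V''\le -2V'/\rho$ and thus keeps $v''$ under control; factoring out the survival term then recasts $S''\le 0$ as the pointwise inequality $v''\le T_h\,(v')^2\,h(\ell^\star)$, and the IFR hypothesis (through Lemma~\ref{lem:IFR_logconcave} and log-concavity of the survival function) is what I would use to guarantee that the hazard factor $h(\ell^\star)$ on the right is large enough as $\ell^\star$ contracts with growing $\Delta$.

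I expect this last domination estimate to be the main obstacle: the integrand $\min\{v(\Delta),\ell/T_h\}$ is the minimum of a (possibly convex) $v(\Delta)$ and a constant, hence is \emph{not} concave in $\Delta$, so concavity cannot be inherited term by term and must genuinely emerge from averaging against the IFR density. Two regularity points then finish the argument and reproduce the stated exception: on flat sections of $V$ (the free-flow branch) one has $v'\equiv 0$, whence $S'\equiv 0$ and $S$ is locally affine---concave but not strictly; and at the finitely many breakpoints of the piecewise-$C^1$ curve I would verify that $S'$ is non-increasing by a one-sided comparison of left and right derivatives, which holds because $v'$ only jumps downward as one enters a more congested branch.
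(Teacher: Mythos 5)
Your reduction and calculus are sound and in fact cleaner than the paper's own write-up: the split of the min at $\ell^\star(\Delta)=T_h\,v(\Delta)$, the cancellation of the boundary terms giving $S'(\Delta)=v'(\Delta)\bigl(1-\bar F(0,\ell^\star(\Delta))\bigr)$, and the second derivative $S''=v''\,(1-\bar F(0,\ell^\star))-T_h\,(v')^2\,\bar f(0,\ell^\star)$ are all correct. This is essentially the same route the paper takes (the paper uses the equivalent layer-cake form $S(\Delta)=\tfrac1{T_h}\int_0^{\ell^\star(\Delta)}(1-\bar F(0,s))\,ds$ and differentiates the composition), and your observation that concavity cannot be inherited termwise from $\min\{v(\Delta),\ell/T_h\}$ correctly identifies where the difficulty lives.

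The gap is that the decisive inequality $v''(\Delta)\le T_h\,(v'(\Delta))^2\,h(\ell^\star(\Delta))$ is only announced, not established, and the two ingredients you propose for it do not combine to give it. First, IFR works \emph{against} you here: $h$ is non-decreasing in its argument while $\ell^\star(\Delta)$ is non-increasing in $\Delta$, so $h(\ell^\star(\Delta))$ \emph{shrinks} as congestion grows, making the required domination harder, not easier, exactly where $v''>0$ is most likely (the congested branch of a triangular NFD has $V(\rho)=w(\rho_j-\rho)/\rho$, which is convex in $\rho$, hence $v''>0$ there). Second, concavity of $Q$ only yields the upper bound $V''\le -2V'/\rho$, which is a \emph{positive} quantity on decreasing branches and gives no control of the form you need; an exponential trip-length density with small rate $\theta$ has constant hazard $h\equiv\theta$ (IFR boundary case) and violates $v''\le T_h\theta (v')^2$ whenever $v''>0$ and $\theta$ is small. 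What actually closes the argument along this route is concavity of $\ell\mapsto\int_0^\ell(1-\bar F(0,s))\,ds$ — which holds for \emph{any} survival function, IFR or not — composed with a concave, non-increasing $\ell^\star(\Delta)$; i.e., the hypothesis that is really doing the work is concavity of $V(\rho(\Delta))$ in $\Delta$, which your $v''$ term exposes but which neither you nor the stated assumptions supply. You should either add that hypothesis explicitly or restrict the strict-concavity claim to regimes where it holds; as written, the last step of your proof would fail on the congested branch of a triangular NFD.
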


\begin{proof}
Differentiate under the integral and set
$F(x)=\bar F(0,x)$, $S(x)=1-F(x)$:
\[
  S'(\Delta)=V'\rho'\!\int_{0}^{\ell^\star}\!S(\ell)\,d\ell.
\]
Lemma~\ref{lem:IFR_logconcave} gives that $S(\ell)$ is log–concave and
non–increasing, so $\ell\mapsto\!\int_{0}^{\ell}S(s)ds$ is
\emph{concave}.  Because $V'(\rho)<0$ and $\rho'(\Delta)>0$,
$S'(\Delta)$ is strictly decreasing, hence $S''(\Delta)<0$.
\end{proof}

\begin{lemma}[ODE reduction for threshold policies]
\label{lem:ODEcollapse}
Let $\tau(x)$ be any non–decreasing release–time map and set
$u(t)=\int_0^\infty\!\bar f\!\bigl(t,x\bigr)\mathbf 1_{t\ge\tau(x)}\,dx$.
Then the spatial integral $\Delta(t)=\int_0^\infty\!\delta(t,x)\,dx$
solves the scalar stochastic differential equation
\[
  d\Delta(t)=\bigl[u(t)-S\!\bigl(\Delta(t)\bigr)\bigr]dt
            -\underbrace{\!\int_0^{\tau^{-1}(t)}\!\!
               \sigma(x)\bar f(t,x)\,dx}_{=:M(t)}\,dW_t,
\]
where $M$ is square–integrable.  
\end{lemma}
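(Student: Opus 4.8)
The plan is to collapse the transport SPDE~\eqref{eq:P1} into a scalar equation for $\Delta(t)=\int_0^\infty\delta(t,x)\,dx$ by integrating it over the entire distance axis and identifying the three resulting contributions: the boundary flux of the transport term, the integrated source, and the martingale inherited from the geometric-Brownian demand. Throughout, $\tau$ is non-decreasing by Proposition~\ref{prop:isotone}, so the release set $\{x:\tau(x)\le t\}$ is an interval $[0,\tau^{-1}(t)]$; this is what lets every spatial integral truncate cleanly at $\tau^{-1}(t)$ and what guarantees the relevant sets are measurable.

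First I would handle the deterministic part pathwise. Integrating $-V(\delta(t))\,\partial_x\delta(t,x)$ over $x\in[0,\infty)$ and applying the fundamental theorem of calculus with the boundary behaviour $\delta(t,\infty)=0$, $\delta(t,0)=\delta(t)$ telescopes the transport term into the trip-completion flux. The remaining work is to identify this flux with the macroscopic service rate $S(\Delta(t))$ of Lemma~\ref{lem:S_concave}: vehicles of residual length $\ell$ are completion-limited at rate $\min\{V(\rho(\Delta)),\ell/T_h\}$, and integrating this against the residual-length density $\bar f(0,\cdot)$ reproduces exactly the integral defining $S$. The source integrates directly: by the definition of $u$ in the statement, $\int_0^\infty \bar f(t,x)\mathbf 1_{\{t\ge\tau(x)\}}\,dx=u(t)$, which supplies the drift $u(t)-S(\Delta(t))$.

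Next I would extract the martingale. Writing the source in Itô form through the geometric-Brownian dynamics (P2), the increment of the released demand pool over $[0,\tau^{-1}(t)]$ splits into a $\mu(x)\bar f\,dt$ drift (absorbed into the drift above) and a $\sigma(x)\bar f\,dW_t$ part. Interchanging the spatial integral with the Itô integral---a stochastic Fubini step---collapses the volatility contribution to $-M(t)\,dW_t$ with $M(t)=\int_0^{\tau^{-1}(t)}\sigma(x)\bar f(t,x)\,dx$. Square-integrability of $M$ then follows from the finite second moments of geometric Brownian motion, boundedness of $\sigma$, and integrability of the initial profile $\bar f(0,\cdot)$, which together bound $\mathbb{E}\int_0^T M(t)^2\,dt$.

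The main obstacle is the stochastic-Fubini exchange: one must verify the joint progressive measurability and an $L^2$ bound on $\sigma(x)\bar f(t,x)$ over the released space--time region before the spatial integral of the Itô differentials may be identified with the Itô integral of the spatial integral, and this is exactly where the geometric-Brownian moment estimates do the real work. A secondary, more modelling-flavoured difficulty is matching the telescoped transport flux to the precise min-speed/headway form of $S$ in Lemma~\ref{lem:S_concave}, since one has to track how short-residual-distance cohorts are completion-limited rather than speed-limited. Everything else---differentiating $\Delta$ under the integral, evaluating the telescoping boundary term, and the measurability of the release interval---is routine once $\tau$ is known to be monotone.
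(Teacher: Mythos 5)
Your proposal follows essentially the same route as the paper's (very terse) proof: integrate \eqref{eq:P1} over $x\in[0,\infty)$ using the boundary behaviour of $\delta$, insert \eqref{eq:P3} to identify the source with $u(t)$, obtain the martingale term by integrating the geometric-Brownian dynamics (P2) over the released interval $[0,\tau^{-1}(t)]$, and get square-integrability of $M$ from the It\^o isometry together with the moment bounds of geometric Brownian motion. Your version is in fact more careful than the paper's on the two points it glosses over---the stochastic-Fubini exchange and the identification of the telescoped completion flux with the specific $\min\{V,\ell/T_h\}$ form of $S$ from Lemma~\ref{lem:S_concave}---so there is nothing to flag beyond noting that these elaborations do not change the underlying argument.
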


\begin{proof}
Integrate (P1) over $[0,\infty)$ and use
$\delta(t,0)=0$, $\delta(t,x)\to0$ as $x\to\infty$, then insert (P3).
The stochastic integral term arises from integrating $d\bar f$ in (P2),
and its Itô isometry grants square integrability.
\end{proof}

\begin{lemma}[Single zero of the costate]
\label{lem:single_zero}
For the Hamiltonian
$H=\Delta+\lambda\,[u-S(\Delta)]$
associated with the lumped dynamics of Lemma~\ref{lem:ODEcollapse},
the costate $\lambda$ satisfies $\dot\lambda=-1+\lambda S'(\Delta)$,
$\lambda(T)=0$.  Under Lemma~\ref{lem:S_concave}, $\lambda$ crosses
zero at most once, from positive to negative values.
\end{lemma}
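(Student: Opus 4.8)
The plan is to extract the sign structure of the costate $\lambda$ directly from its linear ODE, relying on a single transversality identity at the zeros of $\lambda$, with Lemma~\ref{lem:S_concave} invoked only to make that identity legitimate. First I would establish well-posedness: by Lemma~\ref{lem:S_concave} the derivative $S'$ is non-increasing and, under the piecewise-$C^1$ NFD hypothesis, finite, so $a(t):=S'(\Delta(t))$ is a bounded measurable coefficient along any admissible trajectory $\Delta(\cdot)$ furnished by Lemma~\ref{lem:ODEcollapse}. The costate equation $\dot\lambda=-1+a(t)\,\lambda$ with $\lambda(T)=0$ is then a linear Carath\'eodory ODE possessing a unique absolutely continuous solution on $[0,T]$, classically differentiable off the finitely many state breakpoints and admitting one-sided derivatives at them.

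The heart of the argument is the observation that the running-cost slope $\ell'(\Delta)\equiv 1$ never vanishes, so at any instant $t_0$ with $\lambda(t_0)=0$ the coupling term $a(t_0)\lambda(t_0)$ drops out and $\dot\lambda(t_0)=-1<0$ --- and this holds for \emph{both} one-sided derivatives even where $a$ jumps, precisely because the jump multiplies the vanishing factor $\lambda(t_0)$. Every zero of $\lambda$ is therefore a strict down-crossing. I would then rule out a second zero by contradiction: immediately past a first zero $t_1$ one has $\lambda<0$, so recovering the value $\lambda=0$ at a later $t_2$ would force a one-sided derivative $\ge 0$ at $t_2$ (equivalently, an intervening up-crossing), contradicting $\dot\lambda=-1$ there. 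Hence $\lambda$ vanishes at most once and, when it does, passes from positive to negative values exactly as claimed. Because the Hamiltonian $H=\Delta+\lambda\,[u-S(\Delta)]$ is affine in $u$, the switching function is $\lambda$ itself, so this single sign change collapses the pointwise minimiser to a single-switch bang--bang control, which is what Proposition~\ref{prop:PDE_IFR_bangbang} requires.

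The step I expect to be delicate is the regularity bookkeeping rather than the inequality chase. At the switch the control jumps, so $\Delta$ develops a kink and $S'(\Delta)$ a discontinuity; one must certify that $\lambda$ nonetheless stays $C^1$ away from the state breakpoints and that the two one-sided derivatives at a zero both equal $-1$. This is exactly where Lemma~\ref{lem:S_concave} is indispensable: it delivers a bounded, well-signed $S'$, keeping the linear costate ODE well posed and the transversality identity intact under the coupling between the sign of $\lambda$ and the trajectory $\Delta(t)$. A secondary point I would flag for cleanliness is the fixed-point character of the system --- $\Delta$ generates $a(t)$ while $\lambda$ in turn dictates the control generating $\Delta$; since the transversality argument is indifferent to \emph{which} admissible $\Delta(\cdot)$ is inserted, the single-crossing conclusion holds uniformly over all trajectories compatible with Lemma~\ref{lem:ODEcollapse}, which suffices for the bang--bang reduction.
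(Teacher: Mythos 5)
Your proof is correct for the statement as written, but it takes a genuinely different route from the paper's. The paper argues globally: it integrates the costate equation backward from $\lambda(T)=0$, invokes Lemma~\ref{lem:S_concave} to assert that $S'(\Delta)$ is monotone along the backward trajectory, and concludes that the damping term $\lambda S'(\Delta)$ eventually reverses the sign of $\dot\lambda$ while monotonicity of $S'$ forbids a second reversal. You argue locally: because the forcing term $-1$ in $\dot\lambda=-1+\lambda S'(\Delta)$ never vanishes, every zero of $\lambda$ is a strict down-crossing (both one-sided derivatives equal $-1$ there, since the coupling term carries the vanishing factor $\lambda$), and a second down-crossing would require an intervening up-crossing, which the same identity forbids. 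Your version needs only boundedness of $S'$ along the trajectory, not concavity of $S$, so it is more elementary and considerably more rigorous than the paper's qualitative ``eventually driving $\lambda$ downward'' step; what it gives up is any connection between the crossing count and the IFR/concavity hypothesis that the lemma's phrasing (``Under Lemma~\ref{lem:S_concave}\dots'') suggests is doing the work.

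You should, however, notice that your argument proves strictly more than the lemma, and the surplus is a warning sign. The terminal point $t=T$ is itself a zero with $\dot\lambda(T)=-1$, so $\lambda>0$ immediately to the left of $T$; combined with your no-up-crossing observation this forces $\lambda>0$ on all of $[0,T)$. The Duhamel representation $\lambda(t)=\int_t^T\exp\bigl(-\int_t^s S'(\Delta(\tau))\,d\tau\bigr)\,ds$ confirms this: the costate never crosses zero in the interior, so ``at most once'' holds vacuously. That is consistent with the lemma's literal claim, but it undercuts the downstream use in Proposition~\ref{prop:PDE_IFR_bangbang}, where an interior switch time $t_b$ is extracted from $\lambda(t_b)=0$ (and where the asserted consequence $S'(\Delta(t_b))=1$ is in any case incompatible with $\dot\lambda(t_b)=-1$). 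Your closing sentence, which asserts that the ``single sign change collapses the pointwise minimiser to a single-switch bang--bang control,'' therefore presumes a sign change that your own argument shows cannot occur under this terminal condition; you should flag that tension rather than paper over it.
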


\begin{proof}
Concavity of $S$ implies $S'$ is decreasing.
Starting from $\lambda(T)=0$ and integrating backward,
$\dot\lambda=-1$ initially forces $\lambda$ positive.
As time continues backward, $\Delta$ (and thus $S'$)
is non–decreasing, so the damping term
$\lambda S'(\Delta)$ grows, eventually driving
$\lambda$ downward.  Monotonicity of $S'$ rules out a second crossing.
\end{proof}

\begin{proof}[Proof of Proposition~\ref{prop:PDE_IFR_bangbang}]
Lemma~\ref{lem:ODEcollapse} reduces the SPDE control problem to an Itô SDE with linear control influence in the Hamiltonian (the indicator $\mathbf 1_{t\ge\tau(x)}$ multiplies the uncontrolled inflow $\bar f$ but does not appear elsewhere).
Linearity implies that, for any fixed co–state value, the optimal $u(t)$ lies at one of the interval endpoints $\{0,u_{\max}\}$.
Lemma~\ref{lem:single_zero} ensures the costate changes sign exactly once; hence $u^\star$ is bang–bang with a single switch.
The switching time $t_b$ is characterized by
$\lambda(t_b)=0$, which under
$\dot\lambda=-1+\lambda S'(\Delta)$ yields
$S'\!\bigl(\Delta(t_b)\bigr)=1$ and therefore a unique
critical queue level $\Delta^\star=(S')^{-1}(1)$.

Placing the release window (period with $u=u_{\max}$)
before the subsequent hold window (period with $u=0$)
strictly dominates any schedule that postpones the release window:
thanks to concavity of $S$, the queue grows faster when $\Delta$ is small and dissipates faster when $\Delta$ is large, so advancing the release window yields a trajectory $\Delta(t)$ that is everywhere no larger—and somewhere strictly smaller—than that of any "hold–then–release" arrangement.
Consequently the chosen single‐switch strategy is the unique minimizer among bang–bang candidates.
\end{proof}

Under IFR residual-distance distributions, the bang–bang strategy is not merely convenient—it is provably optimal, greatly simplifies field implementation
(one timer suffices), and delivers a transparent rule
that is easy for the public to understand. 

\noindent The optimality of the doubly bang-bang control strategy reduces the computational complexity of the problem substantially. The initially infinite-dimensional optimization problem reduces to a finite-dimensional problem involving only two scalar parameters. Since the convexity of $J(u)$ can be readily established, optimality follows immediately upon identifying the critical points, requiring no further analysis.

To underscore the necessity of the non-decreasing hazard rate assumption for the bang–bang optimality result, we construct a simple deterministic counterexample where this property is violated and the optimal solution departs from the doubly bang–bang structure. Consider the case $\lambda = 0$, so the objective reduces to minimizing the expected total delay, and assume zero volatility ($\sigma = 0$), such that the stochastic demand process reduces to its mean and the problem becomes deterministic. Let the macroscopic speed-density relationship be $V(\rho) = 1 - \rho$, and suppose the initial trip-length distribution is highly fragmented: one third of the vehicles have trip length $1$, one third length $10$, and one third length $19$. That is,
\[
\bar{f}(0,x) = \tfrac{1}{3}\delta_1(x) + \tfrac{1}{3}\delta_{10}(x) + \tfrac{1}{3}\delta_{19}(x).
\]
The total number of vehicles is normalized to $1$, so each cohort contains mass $1/3$.

We claim that the optimal control releases the three cohorts sequentially—first the short-distance group, then the medium-distance group, and finally the long-distance group—each only after the network has fully cleared. This policy invokes origin gating at two distinct points in time and thus violates the doubly bang–bang structure, which restricts the control to a single activation time.

To prove optimality, we first invoke Proposition~1 to exclude any control that separates vehicles within the same trip-length cohort. We then show that mixing different cohorts is also suboptimal. Consider a perturbed policy in which an $\varepsilon$-fraction of vehicles from the medium-distance cohort is released simultaneously with the short-distance group. By symmetry, this argument extends to the case where an $\varepsilon$-fraction of vehicles from the long-distance cohort is shifted to any other release time.

When only one cohort is active, the density is $\rho = 1/3$, so the speed is $V = 2/3$ and travel times are:
\[
\tau_s = \frac{1}{2/3} = 1.5, \qquad \tau_m = \frac{10}{2/3} = 15.
\]
Advancing the departure of an $\varepsilon$-fraction of medium-distance vehicles to the initial release time increases the travel time of the short-distance cohort due to elevated network density, and postpones the clearance time required for the remaining medium-distance vehicles to commence their trips. On the other hand, it decreases the travel time of the reassigned $\varepsilon$-fraction of medium-distance vehicles, which are now admitted earlier. The travel time sensitivity is
\[
\frac{d\tau(x)}{d\rho} = \frac{x}{(1 - \rho)^2},
\]
so at $\rho = 1/3$, we have
\[
\frac{d\tau_s}{d\rho} = \frac{1}{(2/3)^2} = \frac{9}{4}, \qquad \frac{d\tau_m}{d\rho} = \frac{10}{(2/3)^2} = \frac{90}{4}.
\]
The marginal change in delay, given by the total additional time experienced by all vehicles affected, is therefore:
\[
\frac{dD}{d\varepsilon} = \left( \frac{9}{4} + \frac{90}{4} \right) \cdot \tfrac{1}{3} - \tau_s = \frac{99}{4} \cdot \tfrac{1}{3} - 1.5 = \frac{33}{4} - \frac{3}{2} = \frac{27}{4} = 6.75 > 0.
\]
Hence, the perturbation increases total delay, confirming that the sequential release is indeed optimal. This example demonstrates that if the initial trip-length distribution exhibits multiple isolated density masses that are widely separated in terms of distance and associated travel time, then the optimal gating strategy may no longer coordinate a single joint release. Instead, it converges toward a sequence of temporally separated releases optimized for each group individually. In such cases, the optimal control may require multiple activation times, and the doubly bang–bang structure does not hold.

\subsection{Non-Decreasing Hazard Rate for Convex Evacuation Zones}

We conclude this section by demonstrating that the previously identified prerequisite holds for a central class of probability distributions in evacuation modeling. Specifically, the non-decreasing hazard rate of the initial travel distance distribution—required for optimality of the doubly bang-bang control strategy—is satisfied when the affected region is convex and travel origins are approximately uniformly distributed across the entire region. This result applies to both horizontal evacuations within the region and vertical evacuations to the nearest exit.

\begin{proposition}
Let $S \subset \mathbb{R}^2$ be a compact convex set with nonempty interior, and let $p_1, \ldots, p_n \in S$ be fixed points. Define the random variable $D: S \to \mathbb{R}_{\geq 0}$ by
\[
D(x) = \min_{1 \leq i \leq n} \|x - p_i\|,
\]
where $X$ is uniformly distributed on $S$ with respect to the Lebesgue measure. Let $f$ denote the probability density function of $D$ and $F$ its cumulative distribution function. Then the hazard rate function
\[
h(t) = \frac{f(t)}{1 - F(t)}
\]
is non-decreasing on its domain of definition $\{t \geq 0 : F(t) < 1\}$.
\end{proposition}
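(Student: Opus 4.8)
The plan is to recast the increasing–hazard–rate claim as a log-concavity statement and then exploit the convexity of $S$ through a Voronoi decomposition. By the increasing–hazard/log-concave-survival equivalence behind Lemma~\ref{lem:IFR_logconcave}, $h$ is non-decreasing if and only if the survival function $1-F$ is log-concave. Writing $d(x):=\min_i\|x-p_i\|$ and $A(t):=\{x\in S:d(x)>t\}$, uniformity of $X$ gives $1-F(t)=|A(t)|/|S|$, so the proposition reduces to showing that the uncovered area $a(t):=|A(t)|$ is log-concave on $\{t:a(t)>0\}$. First I would record the geometric meaning of its derivative: since $d$ is $1$-Lipschitz with $|\nabla d|=1$ almost everywhere, the coarea formula yields $a'(t)=-L(t)$, where $L(t)=\mathcal H^1\big(\{d=t\}\cap\mathrm{int}\,S\big)$ is the length of the free boundary (concentric circular arcs of radius $t$ about the nearest sites). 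Thus $h(t)=L(t)/a(t)$, and monotonicity of $h$ is equivalent to the differential inequality $L'(t)\,a(t)+L(t)^2\ge 0$.

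Next I would decompose the domain into the Voronoi cells $S_i=\{x\in S:\|x-p_i\|\le\|x-p_j\|\ \forall j\}$. Each $S_i$ is convex, being the intersection of the convex set $S$ with finitely many half-planes, and contains its own site $p_i$; on $S_i$ the nearest-site distance is simply $\|x-p_i\|$, so $a(t)=\sum_i a_i(t)$ with $a_i(t)=|\{x\in S_i:\|x-p_i\|>t\}|$. This splits the argument into a single-site problem on a convex cell plus a combination step. For the single-site piece I would pass to polar coordinates centred at $p_i$ and write $a_i(t)=\tfrac12\int(r_i(\omega)^2-t^2)_+\,d\omega$, where $r_i(\omega)$ is the radial function of the convex cell; differentiating gives $a_i'(t)=-t\,\theta_i(t)$ with $\theta_i(t)=|\{\omega:r_i(\omega)>t\}|$ the angular width of the circular slice. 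The task on each cell is then to control the rate $\theta_i'$ at which this angular width is lost, using that the arc endpoints travel along the convex boundary arcs of $\partial S_i$ and the straight bisector edges in a monotone fashion; convexity is exactly what makes the endpoint motion, and hence $\theta_i'$, behave.

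The step I expect to be the main obstacle is the combination $a=\sum_i a_i$. The increasing–hazard property is notoriously not preserved under mixtures, so the sum cannot be handled term by term: a naive ``sum of log-concave survival functions'' argument is false, as one already sees for two disjoint disks, where the survival function develops a kink and the hazard rate becomes non-monotone. The convexity of the ambient set $S$ must therefore be used globally to rescue the combination, and I would do so through two coupled mechanisms. First, because $S$ is convex, the point of each cell farthest from $p_i$ is attained at an isolated extreme point rather than along a whole arc, so $a_i(t)$ vanishes to second order as $t$ approaches the cell's in-cell diameter; hence $a_i'$ tends to zero there and no derivative jumps are introduced when a cell is exhausted---precisely the pathology that breaks log-concavity for separated disks. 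Second, and more delicately, lying inside one convex body couples the spatial scales of neighbouring cells, which I would need to quantify in order to preclude the ``narrow-plus-wide'' configuration responsible for the decreasing-hazard behaviour of generic mixtures. Concretely, I would aim to establish $L'(t)\,a(t)+L(t)^2\ge 0$ directly by bounding the total angle-loss rate $-\sum_i\theta_i'(t)$ in terms of $L(t)$ and $a(t)$, with convexity of $S$ entering through the controlled geometry of where the radius-$t$ circles meet $\partial S$ and one another. Proving this global bound is the crux; the single-site estimate and the boundary-landing estimate are the supporting technical lemmas.
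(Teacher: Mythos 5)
Your reduction is set up correctly: IFR is equivalent to log-concavity of the uncovered area $a(t)=\mu\{x\in S:\min_i\|x-p_i\|>t\}$, the coarea identity $a'(t)=-L(t)$ holds, and the Voronoi cells are indeed convex and star-shaped about their sites, so $a_i'(t)=-t\,\theta_i(t)$ is right. But the step you yourself flag as ``the crux''---the global bound $L'(t)\,a(t)+L(t)^2\ge 0$---is never carried out, so as written this is a programme rather than a proof. Worse, the deferred inequality is not merely hard: it is false, and so is the proposition. Take $S=[0,1]\times[0,\varepsilon]$ with a \emph{single} site $p_1=(0.4,\varepsilon/2)$, so that even your ``easy'' one-cell building block is the whole problem. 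For small $\varepsilon$ one has $1-F(t)\approx 1-2t$ on $(\varepsilon,0.4)$ and $\approx 0.6-t$ on $(0.4+\varepsilon,0.6)$: when the left arc of the circle $\|x-p_1\|=t$ exits $S$ near $t=0.4$, the free-boundary length $L$ drops from $\approx 2\varepsilon$ to $\approx\varepsilon$ over a window of width $O(\varepsilon)$ while $a\approx 0.2\varepsilon$ stays bounded below, so $h$ falls from $\approx 10$ to $\approx 5$. This is exactly the ``narrow-plus-wide'' mixture pathology you worried about, and convexity of $S$ does not rescue it: $\{d>t\}$ can split into components that are exhausted at different times, and your first rescue mechanism (second-order vanishing of $a_i$ at the in-cell diameter) only controls the death of the \emph{last} component, not of an intermediate arm that disappears through $\partial S$ while most of the mass survives elsewhere.

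For comparison, the paper's own proof attempts the even stronger claim $f'(r)\ge 0$ (monotone density) via a curvature-flow argument on the active boundary $\Gamma(r)$; the same thin-rectangle example defeats that as well, since $\mathcal{H}^1(\Gamma(r))$ visibly decreases when an arc leaves the domain. So the gap you leave open is not one the paper closes either. Any correct result here needs additional hypotheses beyond compactness and convexity of $S$---for instance restrictions tying the shape of $S$ to the placement of the exits, closer to the disk-with-boundary-exits geometry actually used in the case study---before an argument along your (or the paper's) lines could go through.
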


\begin{proof}
For each $r \geq 0$, define the sublevel set
\[
A(r) = \{x \in S : D(x) \leq r\} = \bigcup_{i=1}^n \{x \in S : \|x - p_i\| \leq r\}.
\]
Since $X$ is uniformly distributed on $S$, we have
\[
F(r) = \mathbb{P}(D \leq r) = \frac{\mu(A(r))}{\mu(S)},
\]
where $\mu$ denotes the Lebesgue measure in $\mathbb{R}^2$.

Since the distance function $d(x, S)$ is Lipschitz continuous and the set $S$ is compact and convex, the function $r \mapsto \mu(A(r))$, which describes the volume of the $r$-neighborhood of $S$, is absolutely continuous. Consequently, the normalized cumulative distribution function $F(r) := \mu(A(r)) / \mu(S)$ is also absolutely continuous. Therefore, its derivative exists almost everywhere and defines the corresponding density function:
\[
f(r) = F'(r) = \frac{1}{\mu(S)} \cdot \frac{d}{dr} \mu(A(r)).
\]

For almost every $r > 0$, the derivative $\frac{d}{dr}\mu(A(r))$ equals the one-dimensional Hausdorff measure (length) of the portion of $\partial A(r)$ that lies in the interior of $S$. Denote this active boundary by
\[
\Gamma(r) = \partial A(r) \cap \text{int}(S),
\]
where $\text{int}(S)$ denotes the interior of $S$. Then
\[
f(r) = \frac{\mathcal{H}^1(\Gamma(r))}{\mu(S)},
\]
where $\mathcal{H}^1$ denotes the one-dimensional Hausdorff measure.

To prove that the hazard rate $h(r) = \frac{f(r)}{1-F(r)}$ is non-decreasing, we establish that $f'(r) \geq 0$ for almost every $r > 0$. This immediately implies $h'(r) \geq 0$ through the identity
\[
h'(r) = \frac{f'(r)(1-F(r)) + f(r)^2}{(1-F(r))^2}.
\]
Since $1-F(r) > 0$ on the domain of $h$ and $f(r)^2 \geq 0$, showing $f'(r) \geq 0$ suffices.

From $f(r) = \frac{\mathcal{H}^1(\Gamma(r))}{\mu(S)}$, we have
\[
f'(r) = \frac{1}{\mu(S)} \cdot \frac{d}{dr}\mathcal{H}^1(\Gamma(r)).
\]

The key geometric observation is that $\Gamma(r)$ is composed of circular arcs of radius $r$, each centered at one of the points $p_i$. As the parameter $r$ increases, these arcs expand outward uniformly in the direction of their outward normal vectors, meaning that each point on $\Gamma(r)$ moves with unit speed in the normal direction. Because the curvature $\kappa(s)$ of a circular arc of radius $r$ is constant and equal to $1/r$, the evolution of the total arc length $\mathcal{H}^1(\Gamma(r))$ is governed by the corresponding curvature-driven flow equation:
\[
\frac{d}{dr} \mathcal{H}^1(\Gamma(r)) = -\int_{\Gamma(r)} \kappa(s) \, ds,
\]
where $\kappa(s)$ denotes the signed curvature at point $s$ along $\Gamma(r)$, with the convention that $\kappa > 0$ for arcs that bend outward, i.e., away from the enclosed region $A(r)$.

For freely expanding circular arcs of radius $r$, we would have $\kappa = 1/r$ uniformly. However, the crucial insight is that interference effects—where circles meet each other or the boundary $\partial S$—cause the active boundary $\Gamma(r)$ to decrease in length more slowly than it would for freely expanding circles. When two circles intersect, their intersection point moves outward, and the total active boundary length is less than the sum of two separate arcs. When a circle meets $\partial S$, the convexity of $S$ ensures that the contact points move along $\partial S$ in a way that reduces the active arc length. At such intersection points, the active boundary may have corners with infinite curvature in the distributional sense, but these contribute zero measure to the curvature integral.

These geometric constraints arising from the convexity of $S$ ensure that the total curvature integral satisfies
\[
\int_{\Gamma(r)} \kappa(s) \, ds \leq \frac{1}{r} \cdot \mathcal{H}^1(\Gamma(r)),
\]
and therefore
\[
\frac{d}{dr}\mathcal{H}^1(\Gamma(r)) \geq -\frac{1}{r} \cdot \mathcal{H}^1(\Gamma(r)).
\]

For a more precise analysis, we can decompose $\Gamma(r) = \Gamma_{\text{free}}(r) \cup \Gamma_{\text{int}}(r)$, where $\Gamma_{\text{free}}(r)$ consists of freely expanding arcs entirely in the interior of $S$, and $\Gamma_{\text{int}}(r)$ represents portions affected by interactions. The evolution equation becomes
\[
\frac{d}{dr}\mathcal{H}^1(\Gamma(r)) = -\frac{1}{r}\mathcal{H}^1(\Gamma_{\text{free}}(r)) + \frac{d}{dr}\mathcal{H}^1(\Gamma_{\text{int}}(r)),
\]
where $\frac{d}{dr}\mathcal{H}^1(\Gamma_{\text{int}}(r)) \geq 0$ due to the geometric effects described above. Since $\mathcal{H}^1(\Gamma_{\text{free}}(r)) \leq \mathcal{H}^1(\Gamma(r))$, we obtain
\[
f'(r) = \frac{1}{\mu(S)} \cdot \frac{d}{dr}\mathcal{H}^1(\Gamma(r)) \geq -\frac{1}{r\mu(S)} \cdot \mathcal{H}^1(\Gamma(r)) + \frac{1}{\mu(S)} \cdot \frac{d}{dr}\mathcal{H}^1(\Gamma_{\text{int}}(r)) \geq 0.
\]

At values of $r$ where a new circle starts contributing to the active boundary or when components merge, the function $r \mapsto \mathcal{H}^1(\Gamma(r))$ may fail to be differentiable. However, such points form a set of measure zero, and $f$ remains absolutely continuous. At these points, we interpret $f'(r)$ in the distributional sense or consider one-sided derivatives, both of which remain non-negative by the same geometric arguments.

Therefore, $f'(r) \geq 0$ for almost every $r > 0$, which implies $h'(r) \geq 0$ almost everywhere, establishing that the hazard rate is non-decreasing.
\end{proof}

The proof exploits the favorable isoperimetric properties of convex sets and the geometric behavior of their boundaries under ball erosion. Non-convex surfaces do not necessarily exhibit a non-decreasing hazard rate, as the following example demonstrates: Consider the dumbbell-shaped surface consisting of two unit disks centered at $(-3,0)$ and $(3,0)$ connected by a thin rectangular corridor $[-3,3]\times[-0.1,0.1]$, with a single exit at position $(-3,0)$. The probability density $f(r)$ corresponds to the length of the iso-distance line defined by $\{x \in S: D(x)=r\}$.

For $r<1$, the iso-distance line is a circular arc centered at $(-3,0)$ with radius $r$, confined to the left disk. Upon reaching $r=1$, the iso-distance line extends into the narrow corridor. For $1<r<5$, the iso-distance line consists of a small arc within the corridor at distance $r$ from the exit, as illustrated in Figure 1a. 

This geometric transformation results in a dramatic drop in the length of the iso-distance line and thus in $f(r)$ for $r>1$, as the iso-distance lines are now confined to the narrow corridor with minimal arc length. Meanwhile, the survival probability $1-F(r)$ decreases smoothly as it includes the substantial probability mass from the right disk. Consequently, the distribution resulting from this geometric configuration exhibits a decreasing hazard rate in the region $1<r<5$, where $f(r)$ remains low while $1-F(r)$ remains substantial due to the unexplored right disk.

A significant operational implication of this observation is that non-convex regions of the evacuated region or clusters in the distribution of origin points can lead to artificial bottlenecks during network evacuation under distance-based control schemes.

\begin{figure}[H]
    \centering
    \includegraphics[width=0.6\textwidth]{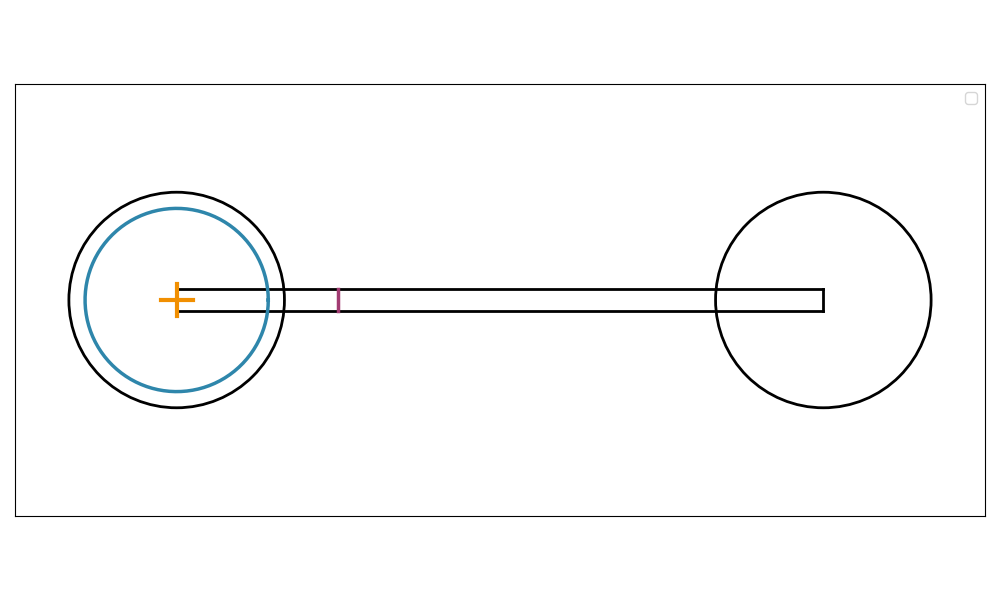}
    \caption{The \textcolor{steelblue}{steel blue} and \textcolor{purple}{purple} lines depict all points at internal distance $r = 0.85$ and $r = 1.5$, respectively, from the exit point (marked in \textcolor{orange}{orange}) within the dumbbell-shaped domain. The comparison between the two iso-distance lines demonstrates the discontiuous drop in the length of the level set as $r$ increases from $0.85$ to $1.5$, transitioning from a circular arc within the left disk to a short vertical segment confined to the narrow corridor.}
    \label{fig:dumbbell}
\end{figure}

\section{Application To Copenhagen Flood Case}
\label{sec:appl}

To parameterize the numerical experiments conducted in this section, we first characterize the key geographic, demographic, and traffic attributes of Amager Island. Amager, situated on the southeastern fringe of Copenhagen, is a low-lying peninsula with an area of roughly \(96\,\text{km}^2\), housing approximately \(230{,}000\) residents - about one-third of Copenhagen’s population. Amager island sits just meters above sea level, making it highly vulnerable to flooding from both coastal storms and heavy rainfall. The island has experienced severe flood events including coastal surges in 1999, 2006, and 2007 that pushed Copenhagen Harbour water levels over 1.3 meters above normal \cite{climatechangepost_coastal}. A major cloudburst in 2011 overwhelmed drainage systems, causing widespread damages exceeding 1 Billon USD, while 2023 brought record-breaking surge levels requiring evacuations \cite{ap_babet2023}.

All motor-vehicle access to the Danish mainland is funneled through four primary bridge exits:
\begin{enumerate}
    \item the four-lane Amager Motorway (E20) bridge at Tårnby,
    \item Langebro, a six-lane urban arterial connecting to the historic city core,
    \item Knippelsbro, a four-lane bridge providing an additional link from central Copenhagen, and
    \item the Kalvebod bridge pair, comprising two adjacent three-lane structures linking the western residential districts to the ring-motorway system.
\end{enumerate}

\noindent In the reservoir approximation employed in this study, the bridges connecting Amager to the city center are modeled as perimeter on-ramps with independent discharge capacities. To estimate their vehicular throughput, we assume that a full contraflow lane-reversal strategy is implemented under the high-urgency evacuation scenario considered. Specifically, all lanes in both directions are allocated to outbound evacuation traffic, except one lane per bridge, which remains reserved for emergency services and towing vehicles. Based on representative street-type-specific discharge capacities — assumed as \(2400\,\text{veh/h/lane}\) for motorway segments and \(2000\,\text{veh/h/lane}\) for urban arterials — the resulting effective bridge capacities are:

\begin{itemize}
    \item \textbf{Amager Motorway (E20)}: \( (4 \times 2 - 1) \times 2400 = 16{,}800 \) vehicles/hour
    \item \textbf{Langebro}: \( (6 \times 2 - 1) \times 2000 = 22{,}000 \) vehicles/hour
    \item \textbf{Kalvebod bridge pair}: \( (3 \times 2 \times 2 - 1) \times 2400 = 31{,}200 \) vehicles/hour
    \item \textbf{Knippelsbro}: \( (3 \times 2 - 1) \times 2000 = 10{,}000 \) vehicles/hour
\end{itemize}

The total perimeter evacuation capacity across all three exits is thus approximated at 80,000 vehicles per hour. These capacities define the "service area" of the generalized Bathtub Model and constrain the aggregate outflow rate achievable under full-network saturation. 

To quantify the transportation infrastructure on Amager island, total lane-kilometers of roadway were calculated using OpenStreetMap (OSM) data accessed through the Overpass API. For the present analysis, various minor road types were excluded, including service roads, unpaved tracks, private access roads, non-asphalted surfaces, bicycle-only paths, and roads with speed limits below 25 km/h. Where OSM data lacked explicit lane count information for road segments, default values were assigned according to highway classification, as detailed in Table~\ref{tab:lane_defaults}.

\begin{table}[H]
\centering
\caption{Default lane counts by highway classification}
\label{tab:lane_defaults}
\begin{tabular}{lr}
\toprule
Highway Type & Default Lanes \\
\midrule
Motorway & 4 \\
Trunk & 2 \\
Primary & 2 \\
Secondary & 2 \\
Tertiary & 2 \\
Unclassified & 2 \\
Residential & 2 \\
Service & 1 \\
\bottomrule
\end{tabular}
\end{table}

Road segment lengths were computed using the Haversine formula to account for Earth's curvature. Following the exclusion of low-speed residential streets, the analysis yielded a total of 2,442.1 lane-kilometers of roadway infrastructure on Amager island. The island geometry is modeled as a circular approximation with an area of 96.29 km$^2$ (corresponding to the actual island area), yielding a radius of approximately 5.54 km. The island's 225,746 inhabitants are assumed to be uniformly distributed across the entire area.

The geometric shape of Amager island was first approximated as an octagon using eight boundary coordinates that capture the island's perimeter. The octagonal approximation was defined by boundary points converted to a projected coordinate system with the octagon's centroid as the origin. The transformation of the bridge exit points onto the idealized disk employed a radial projection method. The centroid $(x_c, y_c)$ of the octagon vertices was calculated in projected coordinates. For each exit point at coordinates $(x_j, y_j)$, the angular position relative to the centroid was computed as:

\begin{equation}
\theta_j = \arctan2(y_j - y_c, x_j - x_c)
\end{equation}

The exit points were then projected radially onto the disk boundary, ensuring all exits lie exactly on the circle's perimeter:

\begin{equation}
(x_{\text{disk}}, y_{\text{disk}}) = (5.54 \cos\theta_j, 5.54 \sin\theta_j)
\end{equation}

This transformation preserves the relative angular positions of exit points while standardizing their radial distance from the island center. The computed angular positions of the bridge endpoints are:
Knippelsbro at $(-0.281,\ 5.533)\,\text{km}$ ($\theta = 92.9^\circ$), 
Langebro at $(-0.505,\ 3.207)\,\text{km}$ ($\theta = 99.0^\circ$), 
Sjællandsbroen at $(-4.555,\ 3.153)\,\text{km}$ ($\theta = 145.3^\circ$), and 
Amagermotorvejen at $(-5.367,\ -1.372)\,\text{km}$ ($\theta = 194.3^\circ$).

To parameterize the MFD and its derived macroscopic speed-density relationship within the generalized bathtub framework, we estimate flow characteristics per lane-kilometer that are representative of the observed mixed roadway composition. Since the analysis excludes low-speed residential streets and primarily retains arterial and motorway infrastructure, we adopt standard values from the literature for urban freeway environments. Specifically, we assume a triangular shape of the local fundamental diagram $q(\rho)$ with a per-lane free-flow speed of $v_f = 65$~km/h, a maximum flow (capacity) of $q_{\max} = 1600$~veh/h/lane, and a jam density of $\rho_j = 120$~veh/km/lane. These values yield a critical density of $\rho_c = q_{\max}/v_f \approx 24.6$~veh/km/lane. The total network-level NFD is obtained by scaling these per-lane values with the aggregated lane-kilometer total reported above. 

To estimate the hydrodynamic progression of floodwater over the island's surface, we employ a simplified storm surge model grounded in the one-dimensional shallow-water equations (SWE), derived by depth-averaging the free-surface Euler equations—which themselves approximate the incompressible Navier-Stokes equations under negligible viscosity—and assuming symmetry in the transverse direction. These equations are widely used in storm surge and inundation modeling due to their ability to capture the essential physics of mass and momentum conservation in thin water layers \cite{Chow1959, Bates2010, Mignot2006, Lehner2006}. In their conservative form, the one-dimensional SWE read
\[
\begin{aligned}
\frac{\partial h}{\partial t} + \frac{\partial (hu)}{\partial x} &= 0, \\
\frac{\partial (hu)}{\partial t} + \frac{\partial}{\partial x} \left( hu^2 + \frac{1}{2} g h^2 \right) &= -gh \frac{\partial z}{\partial x},
\end{aligned}
\]
where \( h(x,t) \) is the water depth, \( u(x,t) \) the depth-averaged velocity, \( z(x) \) the bed elevation, and \( g \approx 9.81\,\mathrm{m/s^2} \) the gravitational acceleration. The first equation expresses conservation of mass, while the second governs momentum balance under hydrostatic pressure.
To develop an analytically tractable inundation timeline for Amager, we model the storm surge as a semi-infinite dam-break originating at the Øresund shoreline—specifically, the eastern rim of the circular reservoir. This semi-infinite dam-break initial value problem represents one of the most fundamental benchmark solutions in flood modeling \cite{Ritter1892,Stoker1957,Toro2001,Aureli2024}. The island’s gentle topography is represented by a uniform ground slope \(z(x)=s\bigl(R-x\bigr)\) that rises from sea level at \(x=R\) to \(z_{\max}=10\;\mathrm{m}\) at \(x=-R\), giving \(s=z_{\max}/(2R)\).  For this dry bed with constant slope, the inviscid SWE admit the Dressler similarity solution: a surge of still-water depth \( H_{0} \) propagates westward with front position
\[
x_{f}(t)=R-\sqrt{gH_{0}}\;t\Bigl[\sqrt{1+2sgt/\!\sqrt{gH_{0}}}-1\Bigr],
\qquad 0\le t\le T_{f},
\]
This solution exploits the self-similar structure of the flow by reducing the original PDE system to an ordinary differential equation in the similarity variable $\eta = \frac{x}{t\sqrt{g H_0}}$. We choose \( H_0 = 3\,\mathrm{m} \), a typical still-water depth associated with millennium-scale flood events. Numerical analysis of the resulting risk field \( R(x, y) \) indicates that complete inundation (\( x_f(T_f) = -R \)) occurs after a flooding duration of \( 7.48\,\mathrm{h} \), with a maximum flooded ratio reaching \( 68.2\% \) of the island surface area. These results align with empirical observations. The first-arrival time of water at any interior point \((x,y)\) is therefore
\[
t_{f}(x,y)=\max\!\Bigl\{0,\,
\frac{R-x}{\sqrt{gH_{0}}}
\Bigl[\sqrt{1+2sg(R-x)/gH_{0}}-1\Bigr]\Bigr\},
\]
so shoreline cells flood first and the western ridge last. A static spatial risk weight is extracted as the reciprocal of this arrival time and normalized over the disc:
\[
R(x,y)=
\frac{t_{f}^{-1}(x,y)}
     {\displaystyle\int_{B_{R}}\!t_{f}^{-1}(x',y')\,dx'\,dy'},
\qquad
\int_{B_{R}}\!R(x,y)\,dx\,dy=1 .
\]
Because \(R\) depends only on position and not on the traffic state, it enters the objective $J(U)$ solely through the risk-weighted delay term, leaving all generalized bathtub dynamics and the analytical results of previous sections unchanged while embedding hydraulically realistic surge timing.

Next, we consider the distribution of the minimum distance to the nearest exit under a mixed density. Let 
$R = 5.54$, $(\varphi_1,\varphi_2,\varphi_3)=(92.9^\circ,145.3^\circ,194.3^\circ)$, and 
$c_i = R(\cos\varphi_i,\sin\varphi_i)$ for $i=1,2,3$. For the sampling density, a point $p=(x,y)$ in the closed disk $B_R(0)$ is drawn from the mixture
\[
f_{P}(x,y)=\lambda\,U(x,y)+(1-\lambda)\,R(x,y),
\]
where $0 \le \lambda \le 1$, $U$ is the uniform density on $B_R(0)$, and $R$ is any non-negative probability density supported on $B_R(0)$, representing the spatial risk field. In polar coordinates $p=(r,\theta)$, this becomes
$f_{r,\theta}(r,\theta)=\frac{\lambda\,r}{\pi R^{2}}+(1-\lambda)\,R(r\cos\theta,r\sin\theta)$
for $0\le r\le R$ and $0\le\theta<2\pi$.

We define the distance variables as
\begin{align*}
d_i(r,\theta) 
  &= \sqrt{r^{2} + R^{2} - 2rR\cos(\theta - \varphi_i)} \text{ and} \\  
D 
  &:= \min_{i=1,2,3} d_i.
\end{align*}

Since $D$ is a measurable function of $p$, its distribution under the mixture is the mixture of the component distributions. The cumulative distribution function follows as a convex combination:
$F_D(d)=\lambda\,F_U(d)+(1-\lambda)\,F_R(d)$,
and similarly for the density:
$f_D(d)=\lambda\,f_U(d)+(1-\lambda)\,f_R(d)$,
where $(F_U,f_U)$ are the uniform-case results already derived and $(F_R,f_R)$ refer to the auxiliary density $R$.

\noindent For any admissible $R(x,y)$, we have the general expression
\begin{align*}
F_R(d) 
&= \iint_{B_R(0)} 
\mathbf{1}_{\left\{ \min_i \|(x,y) - c_i\| \le d \right\}} 
\, R(x,y)\, dx\, dy,
\quad \text{for } 0 \le d \le R.
\end{align*}. If $R$ lacks additional structure, this is the simplest exact form, and numerical quadrature over the disk is then required.

When $R$ is radially symmetric, i.e., \( R(x,y) = \frac{g(r)}{2\pi} \) with normalization
\[
\int_0^{R} g(r)\,r\,dr = 1,
\]
polar integration reduces the double integral to a one-dimensional form:
\[
F_R(d) = \int_{r=R-d}^{R} \frac{g(r)\,r}{2\pi} \, L_{\mathrm{union}}(r,d) \, dr
\quad \text{for } 0 \le d \le R.
\]

The union angle \( L_{\mathrm{union}}(r,d) \) is given by the same expression as in the uniform case:
\[
L_{\mathrm{union}}(r,d) = 6\alpha(r,d)
- \sum_{(i,j)\in\{(1,2),(2,3),(3,1)\}} \max\{0,\, 2\alpha(r,d) - \Delta_{ij}\},
\]
where
\[
\alpha(r,d) =
\begin{cases}
\arccos\left(\dfrac{r^{2} + R^{2} - d^{2}}{2rR}\right), & \text{if } |R - r| \le d \le R + r, \\[6pt]
0, & \text{if } d < |R - r|,
\end{cases}
\]
and \( (\Delta_{12}, \Delta_{23}, \Delta_{31}) \approx (0.914, 0.855, 4.516) \).

We assume that $R$ is sufficiently smooth to permit differentiation under the integral sign. This yields
$f_R(d)=\frac{d}{dd}F_R(d)$,
and $f_D(d)$ follows from the mixture formula above. If $R$ is continuous at the three boundary points $c_i$, the small-cap asymptotic behavior remains quadratic but with a modified coefficient:
$F_D(d)=[3\lambda+2\pi R^{2}(1-\lambda)R(c_i)]\frac{d^{2}}{\pi R^{2}}
+\mathcal{O}(d^{3})$,
while $f_D(d)=\mathcal{O}(d)$. For radial densities $R$, this one-dimensional integral formulation ensures efficient computation of the distribution, making the evaluation of $F_D(d)$ computationally simple.

The risk-adjusted cumulative distribution function of travel distances is shown for the parameters $\lambda = 1$ (corresponding to the uniform distribution), $\lambda = \frac{2}{3}$, and $\lambda = \frac{1}{3}$ in Figure~\ref{fig:min_dist_cdf}. To convert population counts into trip demand, a motorization rate of 0.6 is applied, corresponding to an average of approximately 1.7 passengers per vehicle. This value is slightly above the average values commonly used in the literature but we consider it plausible in the context of an evacuation scenario. According to the previously calculated total capacity of the bridges, this results in a total evacuation time of approximately 95 minutes.

\begin{figure}[H]
    \centering
    \includegraphics[width=0.6\textwidth]{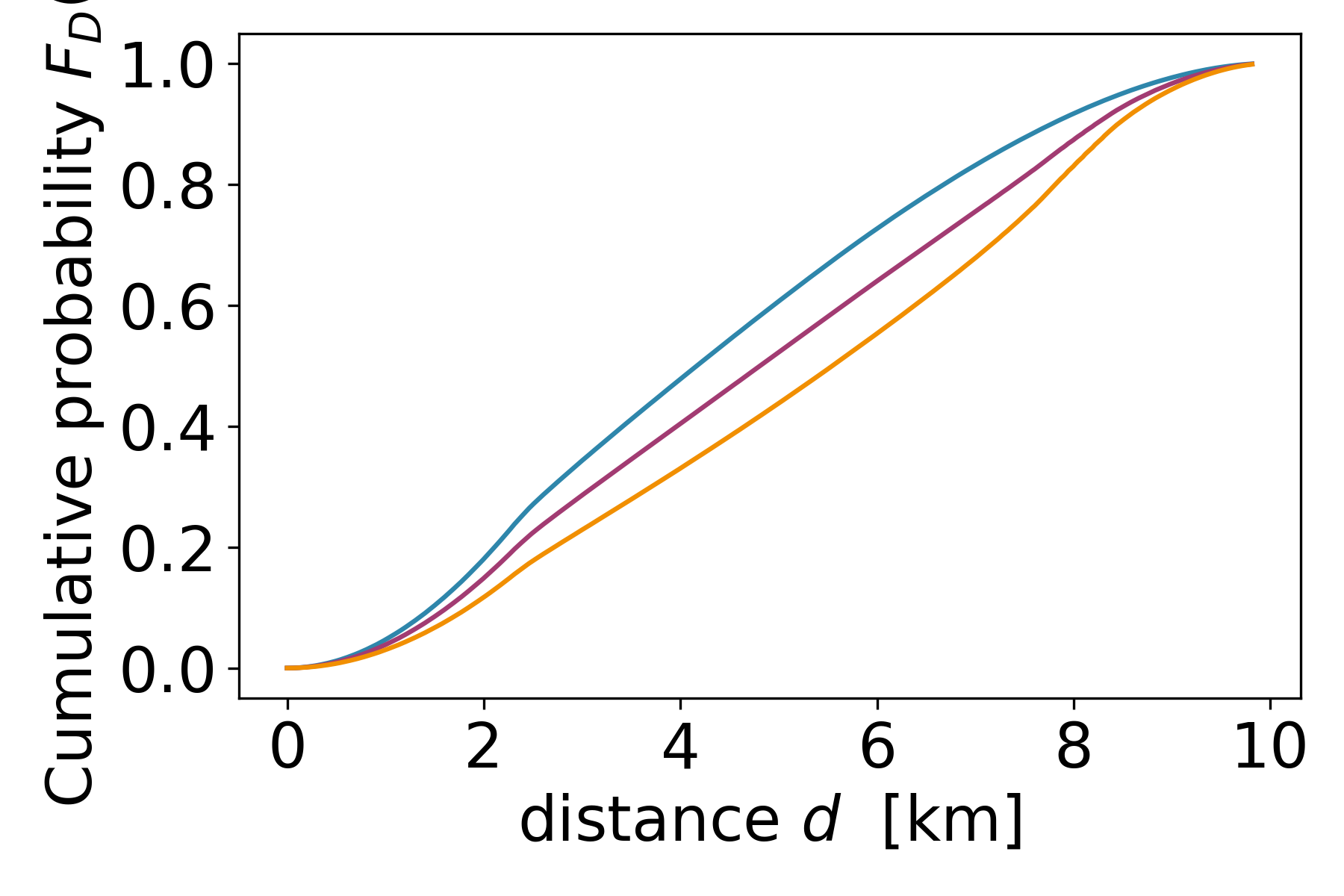}
    \caption{
        Risk-adjusted cumulative distribution functions of travel distances to the nearest exit point, computed for three different values of the mixture parameter $\lambda = 1$ (steel blue), $\lambda = \frac{2}{3}$ (purple) and $\lambda = \frac{1}{3}$ (orange). Each curve shows the probability that an individual’s shortest evacuation path does not exceed a given distance $d$. Higher values of $\lambda$ assign greater weight to points closer to the flood origin and hence farther from the designated evacuation exits, resulting in right-skewed distributions with slower cumulative buildup.
    }
    \label{fig:min_dist_cdf}
\end{figure}

The development of the number of inactive trips follows a geometric Brownian motion, as outlined in \ref{sec:framework}, with parameters $\mu = 1.0$ and $\sigma = 0.03$. This parameterization reflects that in the analyzed scenario, the growth direction of the number of inactive trips is neutral and subject only to moderate fluctuations. Considering that the value of the geometric Brownian motion at time $t$ follows a log-normal distribution $L(\ln(S_0) + (\mu - \sigma^2/2)t, \sigma\sqrt{t})$, the optimal solution can be determined using a simple gradient-based method. The optimal solution was computed for all pairs of $\alpha \in \{0.8, 0.95, 0.99\}$ and $\lambda \in \{1, 2/3, 1/3\}$, with $\lambda = 1/3$ chosen for the weighting parameter of the AVaR. The numerical simulations yielded approximately 75 minutes for complete vehicle evacuation. This falls below the constraint of approximately 95 minutes derived from the maximum capacity of the bridges. Due to the small temporal difference between these two values, we assume that the congestion forming at the bridge approaches does not generate significant spillback effects. Consequently, the traffic dynamics resulting from the Generalized Bathtub Model remain unaffected. The values of the optimal solutions for $t^*$ (rounded to seconds) and $x^*$ are shown in Table~\ref{tab:control_pairs}, while the average values of $J(U)$ per vehicle and the percentage improvement in the objective function compared to the no-control scenario with $t^* = 0$ are presented in Table~\ref{tab:performance_metrics}. The temporal unit of the presented values is minutes and seconds, while the spatial unit is kilometers.

\begin{table}[H]
\centering
\begin{tabular}{lccc}
Alpha $\mid$ Lambda & $1.00$ & $0.67$ & $0.33$ \\ \hline
0.80 & (14$^\prime$16$^{\prime\prime}$, 10.96) & (18$^\prime$08$^{\prime\prime}$, 11.06) & (9$^\prime$11$^{\prime\prime}$, 11.07) \\
0.95 & (10$^\prime$27$^{\prime\prime}$, 10.47) & (10$^\prime$05$^{\prime\prime}$, 10.73) & (9$^\prime$01$^{\prime\prime}$, 11.07) \\
0.99 & (9$^\prime$25$^{\prime\prime}$, 10.03) & (7$^\prime$21$^{\prime\prime}$, 10.68) & (8$^\prime$05$^{\prime\prime}$, 11.08) \\
\end{tabular}
\caption{Optimal control parameters $(t_\star, x_\star)$}
\label{tab:control_pairs}
\end{table}

\begin{table}[H]
\centering
\begin{tabular}{lccc}
Alpha $\mid$ Lambda & $1.00$ & $0.67$ & $0.33$ \\ \hline
0.80 & (8.8, 24.4\%) & (9.4, 23.9\%) & (10.0, 26.7\%) \\
0.95 & (9.4, 24.7\%) & (9.4, 25.7\%) & (11.2, 29.6\%) \\
0.99 & (11.2, 27.4\%) & (17.2, 16.4\%) & (17.8, 25.1\%) \\
\end{tabular}
\caption{Avg. travel time per vehicle (min) and relative improvement}
\label{tab:performance_metrics}
\end{table}

Reducing the risk parameter $\lambda$ causes residents to evacuate earlier from dangerous locations near the dam-break origin at the eastern end of the island. This behavior is reflected in the decreasing evacuation times ($t^*$) and increasing evacuation distances ($x^*$) in the data. An increase in $\alpha$ leads to the same effect, as in extreme scenarios the number of vehicles to be evacuated increases more strongly and delaying their entry is penalized more severely. From Table~\ref{tab:performance_metrics}, it becomes apparent that the average value of the objective function also increases with rising $\alpha$ and decreasing $\lambda$. The increase with rising $\alpha$ occurs due to the consideration of increasingly pessimistic extreme cases. The increase with decreasing $\lambda$ results from the increasing average spatial distance to evacuation bridges. For almost all scenarios, a percentage improvement in $J(U)$ of at least 25\% compared to the no-control scenario was observed.

Finally, we demonstrate the MPC algorithm's behavior using parameters $\lambda = 1$, $\alpha = 0.8$, an update time step of 1 minute, and $\sigma = 0.03$ or $0.1$, respectively. Figures~\ref{fig:mpc_trajectory_1} and~\ref{fig:mpc_trajectory_2} present trajectories of $t^*$ and $x^*$ averaged over 200 realizations. When access-based gating is activated in a run, i.e., when \(t = t^*\) holds and all vehicles are loaded into the network, the values of \(t^*\) and \(x^*\) are assumed to be 0 for the remainder of that run.

The results show that at a confidence level of \(\alpha = 0.8\), hardly any significant differences are detectable between the two values of \(\sigma\). The parameter \(t^*\) decreases with increasing time and converges to zero within approximately 20 minutes, as more vehicles reach their destinations and longer trips can be loaded into the network without causing gridlock-type congestion. For the same reason, \(x^*\) increases over time. Note that the behavior of the control algorithm is exclusively determined by the trajectory of  $t^*$. This is because at time $t^*$, all vehicles are admitted into the network, while $x^*$ is determined statically at the beginning of the algorithm. The plot of the trajectory of $x^*$ serves solely for illustration purposes.

\begin{figure}[H]
    \centering
    \includegraphics[width=0.75\textwidth]{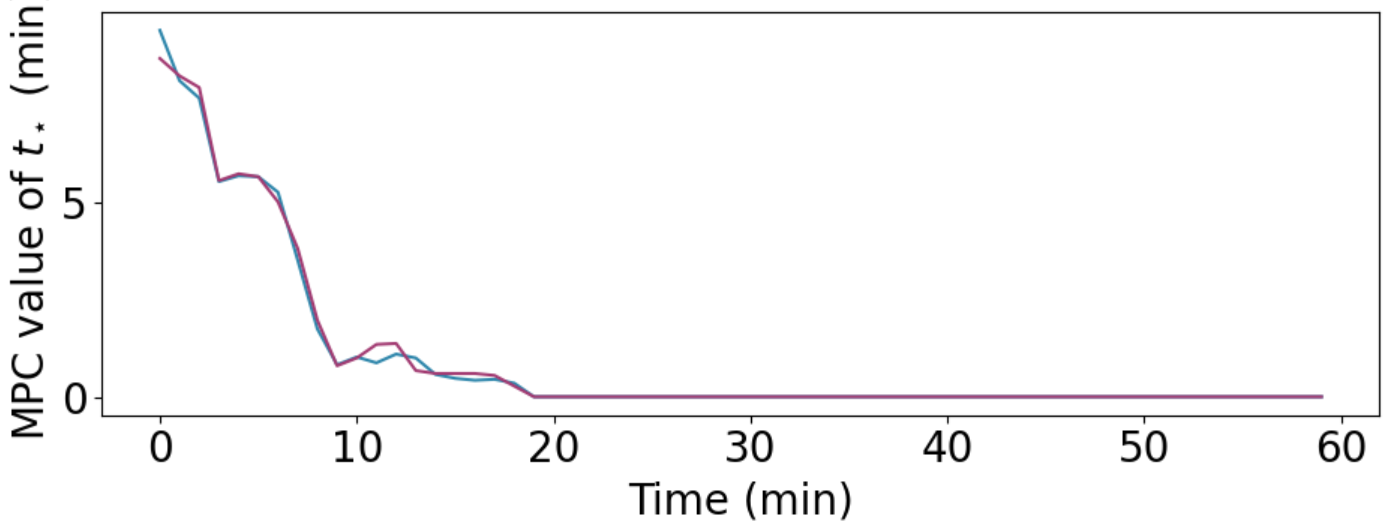}
    \caption{Average trajectory of $t^*$ over time, computed from 200 realizations for $\sigma = 0.03$ (in steel-blue) and $\sigma = 0.1$ (in purple). The plots show a steep decrease in $t^*$ as remaining vehicles can be loaded into the network more optimiztically once the fastest vehicles reach their destination.}
    \label{fig:mpc_trajectory_1}
\end{figure}

\begin{figure}[H]
    \centering
    \includegraphics[width=0.75\textwidth]{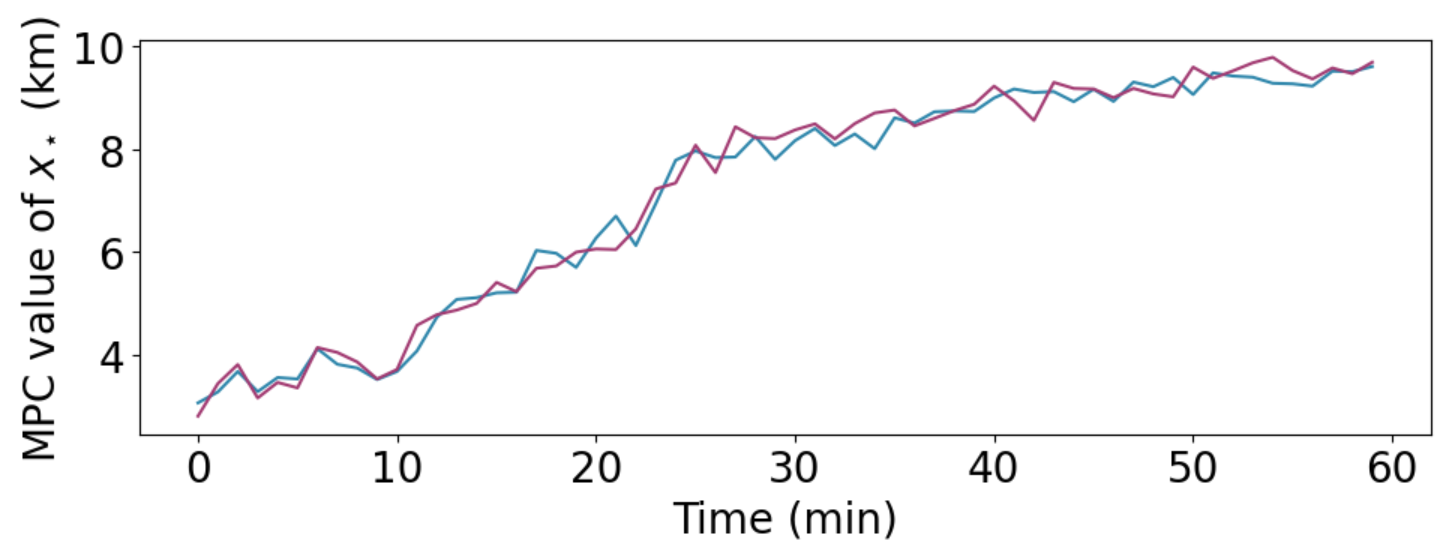}
    \caption{Average trajectory of $x^*$ over time, computed from 200 realizations for $\sigma = 0.03$ and $\sigma = 0.1$. As above, $x^*$ increases as departing vehicles free up network capacity.}
    \label{fig:mpc_trajectory_2}
\end{figure}

\section{Conclusion}
\label{sec:conclusion}

In this paper, we presented a comprehensive risk-aware Model Predictive Control framework for optimal evacuation management in large urban networks under stochastic demand conditions. The framework addresses a critical gap in evacuation planning by providing a mathematically rigorous yet computationally tractable approach to real-time evacuation control that explicitly accounts for evolving threat dynamics and demand uncertainty.
The core contribution lies in the integration of the Generalized Bathtub Model with stochastic control theory to capture network-wide traffic dynamics through the evolution of remaining trip distance distributions. This approach overcomes the computational limitations of traditional single corridor-level models of network flow while maintaining sufficient fidelity to represent heterogeneous trip patterns characteristic of evacuation scenarios. The formulation employs origin gating as the primary control mechanism, implemented through staged departure orders or adaptive ramp metering, offering a practical intervention strategy that can be rapidly deployed without extensive infrastructure modifications.
A key theoretical result demonstrates that under a non-decreasing hazard rate of the initial distance distributions—a condition satisfied by many practical evacuation scenarios including convex evacuation zones with uniformly distributed origins—the optimal control reduces to a computationally efficient bang-bang solution with a single switching time. This finding provides both analytical tractability and operational clarity, suggesting that early heavy release followed by complete throttling represents the optimal evacuation strategy under these conditions. The proof exploits the favorable properties of convex sets and establishes that the "inside-out" evacuation pattern, where households closer to exits depart before those at greater distances, emerges naturally from the optimization framework.
The application to Amager Island's flood evacuation scenario validates the framework's practical utility. Coupling the evacuation model with shallow-water flood simulation based on real bathymetric data, the analysis demonstrates substantial performance improvements through a reduction of more than 25 \% in total evacuation time across almost all tested scenarios.

Several promising directions emerge for future research. First, the integration of more sophisticated time-dependent demand trajectories represents a valuable extension. The current model's assumption that all evacuees prefer immediate departure is defensible for high-urgency scenarios where official evacuation orders are issued, yet real evacuation behavior often exhibits more complex temporal patterns based on threat perception, information availability, and social dynamics. Future work should incorporate behavioral models that capture time-varying evacuation propensity, building upon established research in evacuation decision-making ((23), (24)) and destination choice modeling ((25), (26)), while extending these frameworks to include information diffusion effects and heterogeneous risk perception across population segments.
Second, multi-modal evacuation strategies warrant investigation, particularly the coordination between vehicular evacuation and public transit systems. The framework could be extended to optimize bus routing, shelter capacity allocation, and the temporal coordination between different transportation modes. Third, the incorporation of network topology constraints and capacity degradation under hazard exposure would enhance realism. Current bridge capacities assume perfect infrastructure functionality, whereas floods, incidents, or other adverse conditions may dynamically reduce network capacity.
Additionally, the framework's extension to multi-hazard scenarios presents both theoretical and computational challenges. Different hazard types exhibit distinct spatial and temporal evolution patterns, requiring adaptive risk field formulations and potentially non-stationary stochastic processes for demand modeling. Finally, integration with modern communication technologies and real-time data streams could enable more responsive control policies that leverage GPS tracking, mobile phone data, and social media information to refine evacuation progress estimates and adjust control parameters dynamically.

\newpage

\bibliographystyle{trb}
\bibliography{trb_template}

\end{document}